\tikzset{
  >=latex,
  dot/.style={circle,draw,inner sep=0pt,minimum size=1.5ex,color=#1},
  dot/.default=black,
  ddot/.style={circle,draw,inner sep=0pt,minimum size=1.0ex,color=#1},
  ddot/.default=black,
  sdot/.style={circle,fill,inner sep=0pt,minimum size=0.5ex,color=#1},
  sdot/.default=black,
  group/.style={draw=blue,dashed,inner sep=3pt, shape=ellipse},
  group/.default=blue
}
\algrenewcommand\algorithmicrequire{\textbf{Input:\ }}
\algrenewcommand\algorithmicensure{\textbf{Output:\ }}
\long\def\nuke#1{\relax}
\title{Quota Trees}
\author{Tad White}
\address{IDA Center for Computing Sciences,
 17100 Science Drive, Bowie, MD 20715-4300}
\date{July 6, 2017}
\email{tad(at)super(dot)org}
\keywords{graph traversal, graph search, automata, DFA, regular languages,
 Myhill-Nerode, private information retrieval, graph immersions,
 arborescences, spanning trees, Edmonds' algorithm, lightest paths,
 matrix-tree, random trees, Cayley formula, Lagrange inversion,
 Narayana numbers, combinatorial reciprocity}
\subjclass[2010]{05C30, 05C85, 68R10}
\newcommand{\Z}{\mathbf{Z}}
\newcommand{\R}{\mathbf{R}}
\newcommand{\cL}{\mathcal{L}}
\newcommand{\cD}{\mathcal{D}}
\newcommand{\cT}{\mathcal{T}}
\newcommand{\qsymbol}[3]{\left\{\begin{array}{c}#1\\#2\end{array}\right\}_{#3}}
\newcommand{\bs}{\mathbf{s}}
\newcommand{\bq}{\mathbf{q}}
\newcommand\bl{\boldsymbol{\lambda}}
\newcommand\bb{\boldsymbol{\beta}}
\newcommand\bt{\mathbf{t}}
\newcommand\bw{\mathbf{w}}
\newcommand\bp{\boldsymbol{\phi}}
\newcommand\In{\mathbf{in}}
\renewcommand{\d}{\delta}
\newcommand{\inst}[1]{\mathord\to\relax#1}
\newcommand{\outst}[1]{#1\relax\mathord\to}
\newcommand{\diag}{\mathrm{diag}}
\newcommand{\FU}{F_\textrm{used}}
\newcommand{\FS}{F_\textrm{seen}}
\newcommand{\used}{\mathbf{used}}
\newcommand{\seen}{\mathbf{seen}}
\let\df\textit
\newtheorem{theorem}{Theorem}
\newtheorem{corollary}[theorem]{Corollary}
\begin{document}

\sloppy
\begin{abstract}
We introduce the notion of quota trees in directed graphs. Given a
nonnegative integer ``quota'' for each vertex of a directed multigraph
$G$, a quota tree is an immersed rooted tree which hits each vertex of
$G$ the prescribed number of times. When the quotas are all one, the
tree is actually embedded and we recover the usual notion of a
spanning arborescence (directed spanning tree). The usual algorithms
which produce spanning arborescences with various properties typically
have (sometimes more complicated) ``quota'' analogues.

Our original motivation for studying quota trees was the problem of
characterizing the sizes of the Myhill-Nerode equivalence classes in a
connected deterministic finite-state automaton recognizing a given
regular language. We show that the obstruction to realizing a given
set of M-N class sizes is precisely the existence of a suitable quota
tree.

In this paper we develop the basic theory of quota trees.
We give necessary and sufficient conditions for the
existence of a quota tree (or forest) over a given directed graph with
specified quotas, solving the M-N class size problem as a special
case.  We discuss some potential applications of quota trees and
forests, and connect them to the $k$ lightest paths problem.  We give
two proofs of the main theorem: one based on an algorithmic loop
invariant, and one based on direct enumeration of quota trees. For the
latter, we use Lagrange inversion to derive a formula which vastly
generalizes both the matrix-tree theorem and Cayley's formula for
counting labeled trees.  We give an efficient algorithm to sample
uniformly from the set of forests with given quotas, as well as a
generalization of Edmonds' algorithm for computing a minimum-weight
quota forest.
\end{abstract}
\fussy

\maketitle

\section{Motivation and definitions}

A recently proposed scheme in the area of private information
retrieval \cite{fink2017streaming}
rests in part on the ability to construct
arbitrarily complex deterministic finite automata (DFAs) recognizing a
regular language $\cL$.  While the theory of simplifying, or
minimizing, a finite-state automaton is well known, the inverse
problem of ``complicating'' a DFA leads to interesting questions about
the structure of the set of DFAs recognizing $\cL$.

The Myhill-Nerode theorem implies the existence of a unique minimal
DFA $\cD_\cL$ which recognizes $\cL$. $\cD_\cL$ is a quotient of any
connected%
\footnote{Think of a DFA as a graph $G$ having an initial node and
labeled edges coming out of each node; the DFA is \df{connected}
if any node in $G$ can be reached from the initial node.}
DFA $\cD$ recognizing $\cL$;
that is, the states of $\cD$ can be grouped into equivalence classes,
with one class for each state of $\cD_\cL$, such that the transitions
in $\cD$ are coherent with respect to these classes. So in order to
understand the set of connected DFAs which can recognize $\cL$, one
wants to know what sizes these equivalence classes can take, and to
have an effective algorithm for constructing a connected DFA with
given class sizes.  (Connectedness is the key issue here; if $\cD$ is
allowed to have unreachable nodes, then there is no constraint on the
sizes other than positivity.)

The problem turns out to reduce to a very natural graph search problem.%
\footnote{%
Throughout this paper, we will often use the term ``graph'' to mean
what is usually called a directed multigraph; that is, edges are
directed, and both loops and multiple edges are allowed. We will
frequently encounter directed trees, with edges directed away from the
root; these are typically called
\textit{(out-)arborescences} in the literature. Accordingly, forests
of out-directed trees should be called something like \textit{silvations}.
But we will stubbornly just use ``trees'' and ``forests.''}
In particular, it turns out that a connected DFA with specified
Myhill-Nerode class sizes exists iff one can construct a
directed tree $T$, together with an immersion $f:T\to G$, such
that the sizes of the vertex preimages match the desired equivalence
class sizes; we call $T$ a ``quota tree.'' When it exists, a suitable $T$
can be found via a simple modification of standard graph traversal in
which vertices are visited multiple times, according to the class sizes;
$T$ records the traversal just as an ordinary graph
search is recorded via a spanning tree.
$T$ can then be extended (in many ways) to a DFA by adding missing
transitions.

It is easy to interpret this type of graph search in applications other
than automata; the theory expresses itself most naturally in a broader
context. In section \ref{sec:examples} we describe some
scenarios in which quota trees arise naturally; these illustrate some
quota versions of standard spanning tree optimization problems.
In section \ref{sec:quotatrees} we formally define quota trees and
forests and state the main results. 
Section \ref{sec:quotasearch} introduces the corresponding variant of
graph search, called \textit{quota search}. In section
\ref{sec:enougharrows} we prove the ``enough arrows'' theorem, which
gives necessary and sufficient conditions for the existence of quota
trees (or forests) with specified quotas. In section
\ref{sec:applications} we discuss some applications, particularly
DFAs and the $k$ lightest path problem. In section
\ref{sec:enumeration} we address the problem of enumerating quota trees;
our primary tool is the multivariate Lagrange inversion
formula. The results of
this section give a much more precise version of the ``enough arrows''
theorem, which vastly generalizes both the matrix-tree theorem and
Cayley's formula for counting labeled trees. In section \ref{sec:generation}
we strengthen the enumeration results to sample uniformly from the set
of trees (or forests) with given quotas. In section \ref{sec:mqf} we
give an algorithm for finding minimal-weight quota forests.
Finally, in section
\ref{sec:further-work}, we identify a few areas for further research.

\section{Examples}
\label{sec:examples}
Before giving formal definitions, we present a few scenarios in which
quota trees arise naturally, so that the reader can choose a comfortable
motivating context for the remainder of the paper.

\subsubsection*{A coupon game}
A dealer has a supply of coupon books of various types; all books of a
given type are identical. Each coupon in a book allows you to
buy another coupon
book at a particular price.  (For example, it might be that in an $A$
book, coupon 1 is for another $A$ book at \$5, coupons 2 and 3 are for
$B$ books at \$2 and \$3 respectively, and coupon 4 is good for a free
$D$ book.)  You're given one
or more coupon books to start with; you win if you can collect all of
the dealer's coupon books (and you'd like to do so as cheaply as
possible.)  You know how many books of each type the dealer has, and
what coupons are in what types of book. Is it possible to collect all
the coupons? If so, in how many different ways, and what is the
minimum cost?

\subsubsection*{Network configuration}
You have a supply of network devices of various types; all devices of
a given type are identical. Each type has a single input port and
several output ports, each of which can talk to a specific type of
device.  (For example, an $A$ device might have an $A$ port, two $B$
ports and a $D$ port, a $B$ device might have no output ports, and so
on.) You would like to connect all of your devices together so that a
message from one particular device can then be propagated to all of
the other devices. Is this possible? If so, in how many ways, and
what configuration minimizes the number of intermediate devices
on each path? When there is only one device of each type, this is a
spanning tree problem.

\subsubsection*{$k$ lightest paths}
Given a directed graph $G$, with nonnegative weights on the edges, and
an integer $k\ge1$, compute the $k$ lightest paths from one or more
given source nodes to each vertex in $G$. This can be interpreted as
a minimum quota tree problem.

\subsubsection*{Tree coloring}
Many tree-coloring problems are naturally interpreted as quota-tree
questions. For example, suppose we have $n$ colors, and a subset
$S_i\subset [n]$ for each $i\in[n]$. How many ways can we color
a rooted tree such that $q_i$ nodes have color $i$, and such that
the children of a color-$i$ node get distinct colors selected
from $S_i$? (For example, for two colors, if there are no restrictions
we get Narayana numbers; if blue nodes can only have red children we
get Motzkin coefficients. See section \ref{sec:enumeration} for more
examples.)

\section{Quota trees}
\label{sec:quotatrees}

By a directed multigraph we will mean a tuple $(V,E,i,t)$ where $V$ is
a set of \df{vertices}, $E$ is a set of \df{edges}, and $i:E\to V$ and
$t:E\to V$ return the \df{initial} and \df{terminal} vertices of each
edge. Edges are oriented; we say an edge $e$ goes \df{from} $i(e)$
\df{to} $t(e)$. We may abuse notation by writing $v\to w$ to mean
there is an edge in $G$ from $v$ to $w$, but as $G$ is a multigraph
there may be other edges as well.  In particular, loops are allowed
(that is, one may have $t(e)=i(e)$ for some edges $e$) and the edges
from $v$ to $w$ are all distinguishable (or ``labeled'') as they are
distinct elements of $E$.)

A mapping $f:G\to H$ of multigraphs sends vertices to vertices, edges
to edges, and respects $i$ and $t$: thus, if $e$ is an edge from $v$
to $w$ in $G$, then $f(e)$ is an edge in $H$ from $f(v)$ to $f(w)$.

Define the \df{instar} (resp.~\df{outstar}) of a vertex $v$ to be the
set of incoming (resp.~outgoing) edges at $v$:
$$\inst v = \{e \mid t(e)=v\};\qquad \outst v = \{e \mid i(e)=v\}$$
We say a map $f:G\to H$ is an \df{out-immersion}, or simply an
\df{immersion}, if it maps $\outst v$ injectively into $\outst{f(v)}$.
We define a \df{cusp} in $G$ under $f$ to be a pair
of edges $e_1\ne e_2\in \outst v$ with $f(e_1)=f(e_2)$; thus $f$
is an immersion iff $G$ has no cusps under $f$.

A \df{quota} is a nonnegative-valued function $q:V(G)\to\Z$. A
\df{quota tree} with root $*\in V(G)$ and quota $q$ is an immersion
$f:T\to G$ where $(T,\tilde *)$ is a rooted tree, $f(\tilde *)=*$, and
$|f^{-1}(v)|=q(v)$ for all $v\in V(G)$.  Note that if $q(v)\le1$ for
all $v\in V(G)$, then the map $f$ is actually an embedding, and if
$q(v)$ is identically $1$, the image $f(T)$ is a (rooted, directed)
spanning tree of $G$.

Finally, a \df{quota forest with start portfolio} $s:V(G)\to\Z$
is a (disjoint) union of quota trees
$F=\{T_{v,i}\mid v\in V(G),1\le i\le s(v)\}$
such that $T_{v,i}$ is rooted at $v$. The forest also immerses into
$G$; the quota it achieves is the sum of the quotas of the
component forests.  Note that we treat all the roots as
distinguishable: if a forest contains two or more
non-isomorphic quota trees with roots mapping to the same vertex
of $G$, permuting those trees gives a different quota forest.
We will refer to a forest with quota $q$ and start portfolio $s$
as a $(G,q,s)$-forest (or $(G,q,s)$-tree, if $||s||_1=1$).

A graph with quotas, together with both an example and a nonexample
of quota trees, appears in Figure \ref{fig:QTexample}.
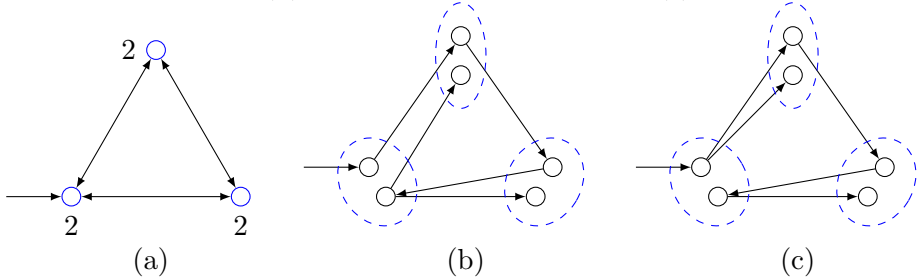
\begin{figure}
\caption{A digraph (a) with quotas and a single-vertex
start portfolio; (b) is a valid quota tree, while (c) is not.}
\label{fig:QTexample}
\begin{tabular}{ccc}
\raisebox{-1ex}{\begin{tikzpicture}
[scale=1.3]
\node at (210:1) [dot=blue] (A) {}; \node [below] at (A.south) {$2$};
\node at (90:1)  [dot=blue] (B) {}; \node [left] at  (B.west) {$2$};
\node at (330:1) [dot=blue] (C) {}; \node [below] at (C.south) {$2$};

\node [left of=A] {} edge [->] (A);

\draw[<->] (A) -- (C);
\draw[<->] (A) -- (B);
\draw[<->] (B) -- (C);
\end{tikzpicture}}
&
\raisebox{0ex}{\begin{tikzpicture}
[scale=1.3]
\node at (200:1)  [dot] (A1) {};
\node at (220:1)  [dot] (A2) {};
\node at (90:1)   [dot] (B1) {};
\node at (90:0.6) [dot] (B2) {};
\node at (340:1)  [dot] (C1) {};
\node at (320:1)  [dot] (C2) {};

\node [left of=A1] {} edge [->] (A1);

\draw[->] (A1) -- (B1);
\draw[->] (B1) -- (C1);
\draw[->] (C1) -- (A2);
\draw[->] (A2) -- (C2);
\draw[->] (A2) -- (B2);

\node[group, rotate=30, fit=(A1) (A2)] {};
\node[group,            fit=(B1) (B2)] {};
\node[group, rotate=-30, fit=(C1) (C2)] {};

\end{tikzpicture}}
&
\raisebox{0ex}{\begin{tikzpicture}
[scale=1.3]
\node at (200:1)  [dot] (A1) {};
\node at (220:1)  [dot] (A2) {};
\node at (90:1)   [dot] (B1) {};
\node at (90:0.6) [dot] (B2) {};
\node at (340:1)  [dot] (C1) {};
\node at (320:1)  [dot] (C2) {};

\node [left of=A1] {} edge [->] (A1);

\draw[->] (A1) -- (B1);
\draw[->] (B1) -- (C1);
\draw[->] (C1) -- (A2);
\draw[->] (A2) -- (C2);
\draw[->] (A1) -- (B2);

\node[group, rotate=30, fit=(A1) (A2)] {};
\node[group,            fit=(B1) (B2)] {};
\node[group, rotate=-30, fit=(C1) (C2)] {};

\end{tikzpicture}}
\\
\qquad(a)&\qquad(b)&\qquad(c)
\end{tabular}

\end{figure}

\subsection*{Out-coverings}
One can think of a spanning tree $G$ as ``living in'' $G$,
but a more natural home for a quota tree is actually
a covering space of $G$, which we will now describe.
We will say a map $\pi:G\to H$ is an
\df{out-covering} if $\pi(\outst{v})$ maps bijectively onto
$\outst{\pi(v)}$ for all $v$ in $V(G)$. In this situation,
given an (out-)immersed tree $f:T \to H$ with root $w\in H$, and a preimage
$v\in \pi^{-1}(w)$, there is a unique lift $\tilde f:T\to G$
with root $v$; the (right) inverse of the operation
$f\mapsto\tilde f$ is given by $f\mapsto \pi\circ f$.

As with topological spaces, we can define a universal out-cover by
considering paths from a distinguished vertex. A (finite directed)
\df{path} in $G$ from $*$ is a sequence $\{e_i\mid 1\le i\le l\}$ of
directed edges of $G$, with $i(e_1)={*}$ and $t(e_i)=i(e_{i+1})$.  We
define the \df{universal out-cover} of $(G,{*})$ to be the directed
graph $(\tilde G,\tilde {*})$ whose vertices are the finite directed
paths from $*$, having an edge (labeled $e_i$) from $e_1\cdots
e_{i-1}$ to $e_1\cdots e_i$. It's easy to see that $\tilde G$ is a
(generally infinite) rooted tree, in which the root $\tilde *$
corresponds to the length-zero path in $G$ from $*$. The natural map
$\pi:\tilde G\to G$ taking a directed path to its endpoint in $G$ is
an immersion.  Note that the in-degree of each vertex $\tilde
v\in\tilde G$ is one; the out-degree of $\tilde v$ is the same as the
out-degree of $\pi(\tilde v)$. (In particular, if $G$ is a DFA over an
alphabet $\Sigma$, then $\tilde G$ is a regular tree, directed outward
from the root, with each vertex having out-degree $|\Sigma|$.)

With this setup, it is easy to see that if $f:(T,t)\to (G,{*})$ is an
immersion of a rooted directed tree into $G$, then $f$ can be lifted
uniquely to a map $\tilde f:(T,t)\to (\tilde G,\tilde {*})$ such that
$f=\pi\circ\tilde f$.%
\footnote{There is a larger ``universal cover'' that appears in the
  literature (see for example \cite{norris1995universal}), based on
  paths whose edges which need not be coherently oriented. This is
  essentially the topological universal cover of $G$ (see
  \cite{Stallings1983}), constructed by ignoring orientations, which
  also has the same universal lifting property for immersions of
  rooted directed trees.  However, the universal out-cover is the
  smallest space which has this property, and so is the ``natural''
  home for quota trees.  We note that Yamashita and Kaneda, in their
  study of computing in anonymous networks, referred to the
  universal out-cover $(\tilde G,\tilde {*})$ as the \df{view}
  of $\tilde *$ within the topological universal cover (see
  \cite{Yamashita:1988:CAN:62546.62568}.)}
The map $\tilde f$ is injective, so we can view $T$ as sitting inside
of $\tilde G$.

\section{Quota search}
\label{sec:quotasearch}

The problems in section \ref{sec:examples}, as well as the original
problem of computing possible Myhill-Nerode class sizes, correspond
to a variant of graph search in which we are given a positive
``quota'' $q(v)$ for each $v\in V(G)$, and we wish to visit each
vertex $v$ exactly $q(v)$ times. (When $q(v)=1$ for all $v$, this is
ordinary graph traversal.)%
\footnote{Setting $q(v)=0$ for any particular $v$ is legal; it
 essentially amounts to working in the induced graph $G-\{v\}$.}
We refer to this goal as \df{quota search}.

We assume familiarity with standard graph traversal as described,
for example, in \cite[ch.~22]{cormen2009introduction}, to which we
will make some modifications.
Given a directed graph $G$ and a set $S$ of start vertices,
generic graph search keeps track of discovered but unprocessed
vertices in a generic priority queue. As we will be
dealing with multigraphs, and visiting vertices multiple times, we
will need to be more careful to distinguish between an edge from
$u$ to $v$ and the pair $(u,v)$ itself; indeed, it is much easier
to describe quota search succinctly by considering edges rather than
vertices. So our algorithm encodes the search forest $F$
via a predecessor function $\pi:E(F)\to E(F)$, rather than the more
usual $\pi:V(G)\to V(G)$.  Accordingly, we replace the usual
\textsc{VisitVertex} procedure with an analogous \textsc{UseEdge},
which inserts an edge taken from the queue into the search forest.

Recall that the quota forest $F$ does not actually live in $G$, so
we must distinguish between an edge $\tilde e$ in $F$ and its image
$e=f(\tilde e)$ under the immersion $f:F\to G$, whose construction
will be implicit. An edge $\tilde e$ in the queue should be thought
of as living in the universal cover $\tilde G$, not in $G$.

Instead of coloring vertices \textsc{Black} or \textsc{White}
according to whether or not they have been visited, we
keep track of the number of remaining visits to a vertex $v$. Thus,
\textsc{Black} and \textsc{White} correspond to quotas of $0$ and $1$
respectively.

In ordinary graph search, we gain nothing by repeating a search from
the same start point, but allowing repeated starts even from the same
node can be useful if we need to arrive at vertices multiple times.
As described in section \ref{sec:quotasearch}, we replace the set $S$
of start vertices with a nonnegative start portfolio $s:V(G)\to \Z$;
$s(v)$ is the number of times a search can be started
from a given vertex. (Thus, if we're doing a single search from one
particular vertex $w$, we set $s(v)=1$ if $v=w$ and $0$ otherwise.)

Finally, it is useful to distinguish two natural variants of search.
In the ``exact'' version of quota search, we want our search forest to
contain exactly $s(v)$ trees with root $v$. (This corresponds, in the
coupon-game scenario, to requiring that every coupon in the initial
collection be used up.) In the ``at-most'' version, the search forest
may contain \textit{at most} $s(v)$ trees with root $v$; that is, we
don't need to use all of our coupons. The two versions are closely
related:

\begin{theorem}[exact vs.~at most solvability]
A triple $(G,q,s)$ admits an exact quota forest iff it admits an at-most
 quota forest and $q(v)\ge s(v)$ for all $v\in V(G)$.
\end{theorem}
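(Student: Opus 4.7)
The forward direction is essentially a definitional unpacking: an exact $(G,q,s)$-forest is already an at-most $(G,q,s)$-forest, and its $s(v)$ trees rooted over $v$ contribute $s(v)$ distinct preimages of $v$ to the total count $|f^{-1}(v)|=q(v)$, so $q(v)\ge s(v)$ for all $v$.

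For the reverse direction, my plan is to iterate a single ``split'' operation that increases the number of roots over a chosen vertex by one without disturbing the quota. Start from an at-most $(G,q,s)$-forest $F$ with actual start portfolio $s'\le s$. While $s'\ne s$, choose $v$ with $s'(v)<s(v)$. Since $v$ has $q(v)\ge s(v)>s'(v)$ preimages in $F$, of which only $s'(v)$ are tree roots, there is some non-root preimage $\tilde v$ of $v$ lying in a component tree $T$ of $F$; delete from $F$ the unique edge of $T$ entering $\tilde v$. This splits $T$ into two subtrees: one retains the original root, and the other is a new tree rooted at $\tilde v$, whose root consequently still maps to $v$.

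Three things need checking at each step, all straightforward. First, deleting an edge does not change any vertex preimage count, so the quota $q$ is still realized. Second, the induced map on the smaller forest is still an immersion, since restricting an injective map on outstars keeps it injective. Third, the new start portfolio $s''$ differs from $s'$ only in being larger by one at $v$, and since $s'(v)<s(v)$ we still have $s''\le s$. The quantity $\sum_v (s(v)-s'(v))$ is a nonnegative integer that strictly decreases each round, so the process terminates, necessarily at $s'=s$, yielding an exact $(G,q,s)$-forest. The only point that really requires the hypothesis is the existence of the non-root preimage $\tilde v$, and this is precisely where $q(v)\ge s(v)$ is used; the remaining items are routine bookkeeping.
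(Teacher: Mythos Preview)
Your proof is correct and follows essentially the same approach as the paper's: both argue the forward direction by noting an exact forest is in particular an at-most forest with $q(v)\ge s(v)$, and both prove the reverse direction by cutting non-root preimages of $v$ away from their parents to create additional trees rooted over $v$. Your version is simply more explicit, spelling out the iteration, the invariants (quota, immersion, $s''\le s$), and the termination measure, whereas the paper dispatches the reverse direction in a single sentence.
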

\begin{proof} Since an exact forest actually solves the
at-most problem, and clearly requires $q(v)\ge s(v)$ for all $v\in V(G)$,
one direction is trivial. On the other hand,
if we have an at-most quota forest $F$ with fewer than $s(v)$ trees
rooted at lifts of $v$, we can simply cut off some of the $q(v)$
occurrences in $F$ of lifts of $v$ from their parents, making them
roots of new trees. This works as long as $q(v)\ge s(v)$.
\end{proof}

Both the exact and at-most versions of quota search can be handled
with a single meta-algorithm. In both cases we initialize $Q$ with
(sentinel edges corresponding to) the start portfolio. In order to
implement \textsc{ExactQuotaSearch}, we simply arrange for
\textsc{QueueExtract} to return the start portfolio first; to
implement \textsc{AtMostQuotaSearch}, we drop that restriction, in
which case the number of new trees created, and what their roots are,
will depend on the particular queue extraction algorithm.

We capture the resulting generic quota search
meta-algorithm as Algorithm \ref{alg:genericquotasearch}. It succeeds
if it ends with all quotas reduced to zero. The ``enough arrows''
theorem will characterize triples $(G,q,s)$ such that a quota forest
exists (in which case the algorithm is guaranteed to succeed for any
specialization of \textsc{QueueExtract}.)

\begin{algorithm}
\caption{Generic quota search}
\label{alg:genericquotasearch}

\begin{algorithmic}[5]
\Function {NewEdge}{$\tilde e$,$e'$}
  \Require $e'\in E(G)$; $\tilde e\in E(F)$ with $f(t(\tilde e))=i(e')$
  \Ensure a new edge $\tilde e'$ with $f(\tilde e')=e'$, $\pi(\tilde e')=\tilde e$
\EndFunction
\Statex
\Function {NewSentinelEdge}{$v$}
  \Require $v\in V(G)$ \Comment $v$ will be the root of a tree in $F$
  \Ensure a new sentinel edge $\tilde e$ with $f(\tilde e)=NULL$, $f(t(\tilde e))=v$
\EndFunction
\Statex
\Procedure {UseEdge}{$\tilde e$}
  \Require an edge $\tilde e$ such that $v=f(t(\tilde e))$ satisfies $q(v)>0$
  \Ensure Add $\tilde e$ to $F$; this updates $F$ and $q(v)$ and adds
          $\outst{t(\tilde e)}$ to $Q$
  \State $F=F\cup\{\tilde e\}$
  \State $q(v)\gets q(v)-1$
  \For {$e' \in \outst v$}
    \Comment in practice, skip $e'$ if $q(t(e'))$ is already zero
    \State \textsc{QueueInsert}$(Q,\textsc{NewEdge}(\tilde e,e'))$
  \EndFor
\EndProcedure
\Statex
\Function {GenericQuotaSearch} {$G$,$q$,$s$}
\Require $G$ a directed graph; $q$ and $s$ are nonnegative functions on $V(G)$
\Ensure quota forest $F$, predecessor map $\pi:E(F)\to E(F)$, and
  immersion $f:F\to G$
  \State $Q,F\gets\emptyset$
  \For {$v\in V(G),  k\in\{1,\ldots s(v)\}$}
    \State $\textsc{QueueInsert}(Q,\textsc{NewSentinelEdge}(v))$
  \EndFor
\item[main loop:]
  \While {$Q$ is nonempty } 
    \State $\tilde e\gets \textsc{QueueExtract}(Q)$
    \If {$q(f(t(\tilde e)))>0$}
      \State $\textsc{UseEdge}(\tilde e)$
    \EndIf
  \EndWhile
  \State \textbf{return} $\pi,f$ {\textbf{unless}\textrm{ all $q(v)$'s are zero}}
  \Comment else fail
\EndFunction

\end{algorithmic}
\end{algorithm}

\subsubsection*{Algorithm success and achievable parameters}

Whenever \textsc{UseEdge} is called, $q(v)$ is the number of
remaining required visits to $v$. Thus the algorithm succeeds
(i.e.~visits all vertices the required number of times)
if and only if, upon
termination, $q(v)=0$ for all $v\in V(G)$.  It turns out that, in
contrast with ordinary graph search, success is not possible for all
pairs $(q,s)$.  We will call $(G,q,s)$ \df{achievable} if some
(and, it turns out, any) quota search in $G$ with start portfolio $s$
achieves the quotas $q$. (It is easy to see that achievability does not
depend on whether we are talking about ``exact'' or ``at most'' quota
search.)  
The ``enough arrows'' theorem in the next section precisely characterizes
the achievable parameters.

\subsubsection*{Quota search viewed in $\tilde G$}
One way to think about quota search is that we replace each vertex $v$
with a supply of copies of itself; when we ``visit'' $v$, we're actually
visiting a fresh copy.  When the start portfolio is [a single copy of]
a single vertex $*$, this allows us to describe quota search as
occurring in the forward universal cover $\tilde G$ of
$G$. Specifically, we do ordinary graph search in $(\tilde G,\tilde
*)$, but only visit a vertex $\tilde v$ provided $q(v)>0$, where
$v=\pi(\tilde v)$, in which case we decrement $q(v)$.  Finally, if the
start portfolio $s$ is a multiset of vertices, we effectively work in
the disjoint union of $s(v)$ copies of $(\tilde G,\tilde v)$ for
all $v$. Whether the search trees are built sequentially, or at the
same time, is controlled by the order in which \textsc{QueueExtract}
selects edges for consideration.

\subsubsection*{Optimization problems}
As with ordinary graph search, the versatility of this meta-algorithm
comes from the variety of ways of choosing which element to extract
from $Q$ at each step. By specializing $Q$ to be a FIFO queue,
a LIFO stack, or a more general priority queue results in quota-search
we obtain quota variants of algorithms such as breadth-first search,
depth-first search, or Dijkstra's algorithm.

If we are optimizing an objective function which depends only on the
forest $F$, but not the particular traversal of $F$, then the data
associated with an edge $\tilde e$ in the queue $Q$ may only depend on
the unique path to $\tilde e$ in $F$; we will call such data
\df{intrinsic}. For example, if the edges of $G$ have weights, it is
natural to consider the ``minimum quota forest'' problem, a
generalization of the minimum spanning tree problem in which we wish
to minimize the sum of the weights of the edges in a quota forest with
the given start portfolio and quotas. In this case we take the key for
an edge $\tilde e$ in $Q$ to be the weight of its image $e=f(\tilde
e)$ in $G$. Similarly, a quota version of Dijkstra's algorithm is
obtained by taking the key to be the sum of the weights in the path to
$\tilde e$ in the search forest; see section \ref{sec:applications}.
In both cases the keys are intrinsic.

It may be tempting, knowing that a vertex will be visited $q(v)$
times, to assign the $k$-th visit to a vertex a cost which depends on
$k$. However, this is not intrinsic: different traversals of the same
forest could then result in different tree costs.  But it would be
perfectly legal to assign edge $\tilde e$ a cost which depends on the
number of visits to $t(\tilde e)$ (or any other nodes) on the
path in $F$ to $\tilde e$.

Of course, not all graph optimization problems are solvable via graph
search.  For instance, a very natural problem is to find a
minimum-weight quota tree (or forest) given weights on the edges of
$G$; here we must emphasize that we really mean quota arborescence (or
branching.)  When the quotas are at most $1$, this is just the minimum
arborescence (or branching) problem.  An algorithm for solving this
problem has been given by Edmonds \cite{edmonds1967optimum} and
others. Rather than accreting a tree via graph search, it iterates
through a sequence of putative solutions. Edmonds' algorithm adapts
beautifully to find minimum quota trees (and, in particular, find the
minimum-cost solution to the coupon collecting problem in section
\ref{sec:examples}.) We discuss minimum-weight quota trees in
section \ref{sec:mqf}.

\subsubsection*{Relaxation}
Many natural priority queue keys have a property which allows us to
maintain a smaller queue. As noted previously, an intrinsic cost
associated to an edge $\tilde e$ in $Q$ is some function $c(\tilde p)$
of the unique path $\tilde p=\tilde e_1\cdots \tilde e_k=\tilde e$
in the quota forest from the root to $\tilde e$.
We say $c$ is \df{append-monotonic} if key order is invariant under
appending a common path: that is, if we have two paths
$\tilde p_1$ and $\tilde p_2$ satisfying
$c(\tilde p_1)\le c(\tilde p_2)$, and both ending at lifts of a
common vertex $v$, then for any path $p_3$ in $G$ starting at $v$,
then
$$
c(\tilde p_1\tilde p_3)\le c(\tilde p_2\tilde p_3).%
\footnote{Here the two $\tilde p_3$'s are strictly different, since
they represent the lifts of $p_3$ to the endpoints of $\tilde p_1$
and $\tilde p_2$ respectively.}
$$

If $f$ is append-monotonic, we know the best extensions of paths
will be extensions of best paths. So we can just keep track of
the $q(v)$ best paths to each vertex $v$.
This is the quota-search analogue of what is called
\textit{relaxation} in ordinary graph search (see
\cite[Ch.~24]{cormen2009introduction}): namely, when we arrive at
a previously seen vertex via a new path, we can keep the better of the
two paths and discard the other. In generic quota search, we might
handle this with a min-max queue of size $q(v)$ at each node $v$,
in which case a generic implementation of \textsc{QueueExtract} via
two stages of binary heaps would
take $\lg V+\lg q(v)$ operations.

\subsubsection*{Complexity analysis}
In the generic version of quota search, we visit each vertex $v$
$q(v)$ times, doing one queue extraction and $|\outst v|$ insertions.
So the number of insertions and extractions (and space) required is
$\sum_v q(v) Adj(v)$ where $Adj(v)=|\outst v|+1$.
When
$q(v)=1$ for all $v$, and $Q$ is a simple queue or stack (so that
insertions and extractions can be done in constant time), note that
this reduces to $O(V+E)$, the complexity of ordinary graph search.

If $Q$ is a priority queue, this leads to a complexity of
$$O\left( \sum_v q(v) Adj(v) (\lg \sum_v q(v) Adj(v))\right)$$
operations if binary heaps are used.
If the queue keys are append-monotonic, we can apply relaxation
as above, reducing the work to
$$O\left( \sum_v q(v) Adj(v) (\lg V+\lg q(v))\right).$$
(This reduces to $O(E\lg V)$ when the quotas are identically 1.)
As usual, more sophisticated heap structures can provide further
asymptotic improvement.

\section{The Enough Arrows theorem}
\label{sec:enougharrows}
In this section, we identify two conditions which the data $(G,q,s)$
must satisfy in order for quota search to succeed; one is global, the
other is local.  We show that these conditions are in fact sufficient:
there exists a quota forest meeting the specified quotas if and only
if these conditions hold.  (In section \ref{sec:enumeration} we will
give an independent proof based on direct enumeration of quota forests.)

\begin{description}
\item[Global] $(G,q,s)$ is \emph{connected} if, for
every node $v$ with $q(v)>0$, there exists a node $u$ with $s(u)>0$ and a path
in $G$ from $u$ to $v$. Note this only depends on the support of $q$ and $s$.
\item[Local] $(G,q,s)$ has \emph{enough arrows} if the inequality
\begin{equation}
\label{eq:enougharrows}
s(w) + \In(w) \ge q(w)
\end{equation}
holds for each $w\in V(G)$, where
$\In(w):=\sum_v q(v)m_{vw}$.
\end{description}

We remark that the enough arrows condition can be written as
$$\bs + \bq M \ge \bq,$$
where $\bq$ and $\bs$ are the vectors of values of $q$ and $s$
respectively, and $M$ is the adjacency matrix of $G$.

Connectivity is clearly necessary in order to achieve even one visit
to every vertex with positive quota. To
see why having enough arrows is necessary, note that each visit to
node $w$ arises either by starting at $w$, or by following an edge
from another node $v$.  We visit node $v$ $q(v)$ times; each time, we
have $m_{vw}$ edges we can potentially follow to node $w$.  Thus the
maximum number of arrivals at node $w$ is the left-hand side of
(\ref{eq:enougharrows}), which must be at least $q(w)$.

A note on terminology: especially in the context of automata, directed
graphs are typically drawn with arrows representing both transitions
and initial states.  The left-hand side of the inequality
(\ref{eq:enougharrows}) counts the maximum number of arrows that can
be drawn into each class (see figure \ref{fig:QTexample});
the right-hand side represents the number
of targets that need to be hit by these arrows.

\begin{theorem}[enough arrows]
\label{thm:maintheorem}
With the notation above, generic at-most quota search in $G$ with start
portfolio $s$ will achieve the quotas $q$ if and only if $(G,q,s)$ is
connected and has enough arrows.
\end{theorem}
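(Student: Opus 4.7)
The plan is to split the theorem into necessity and sufficiency. Necessity is essentially the argument already sketched in the paragraph preceding the theorem: a vertex $v$ with $q(v)>0$ is visited only as the endpoint of some path in $F$ from a root, and applying the immersion $f$ projects that path to a $G$-path from $\text{supp}(s)$ to $v$, so connectivity is forced; and counting each arrival at $w$ as either a start (at most $s(w)$ contributions) or as a traversal of one of the $m_{vw}$ edges $v\to w$ (at most $q(v)\,m_{vw}$ per $v$) forces the enough-arrows inequality.

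For sufficiency I would maintain a pair of loop invariants on the main loop of Algorithm \ref{alg:genericquotasearch}. At each iteration, define $r:V(G)\to\Z_{\ge 0}$ by $r(v):=|\{\tilde e\in Q:f(t(\tilde e))=v\}|$, so that $r=s$ at initialization. The invariants I propose are: \textbf{(I1)} the current enough-arrows inequality $r(w)+\sum_v q(v)\,m_{vw}\ge q(w)$ holds for every $w\in V(G)$, and \textbf{(I2)} $(G,q,r)$ is connected, in the sense that every $v$ with $q(v)>0$ admits a directed $G$-path from some $u$ with $r(u)>0$. Both hold at initialization by the hypotheses of the theorem. Once the main loop terminates we have $Q=\emptyset$, so $r\equiv 0$, and (I2) then forces $q\equiv 0$, which is exactly the success of the algorithm.

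Preservation of (I1) is a short bookkeeping check. At a \textsc{UseEdge} step processing an edge with target $v$, the RHS at $v$ drops by one, $r(v)$ drops by one, the term $q(v)\,m_{vw}$ drops by $m_{vw}$ at each $w$, and the $|\outst v|$ fresh out-edges restore exactly $m_{vw}$ to $r(w)$ at each $w$; everything balances vertex by vertex. A discard step touches only $r$ at a vertex whose $q$ is already zero, so the inequality there stays trivial. A use step also preserves (I2) easily: each successor $w$ of $v$ inherits a new $Q$-source via the enqueued edges, so reachability to everything downstream of $v$ is maintained, while the remaining demand at $v$ (if $q(v)$ is still positive) is served through an in-neighbor of $v$ whose existence is guaranteed by (I1) plus (I2) for that in-neighbor.

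The step I expect to require the most care, and hence the \emph{main obstacle}, is preservation of (I2) under a discard that drops $r(v_0)$ to zero: in principle this could orphan some $w\in\text{supp}(q)$ that was only reachable through $v_0$. I would handle this by contradiction using (I1): a break in (I2) forces a nonempty set $S\subseteq\text{supp}(q)$ carrying no $Q$-source and with every in-neighbor inside $S$, and tracing this subgraph backward through the history of the algorithm should produce a corresponding structural obstruction already at initialization, contradicting the input connectivity hypothesis. Making this traceback rigorous — in particular arguing that no intermediate step could have created the orphaned set — is where I expect the hardest case analysis.
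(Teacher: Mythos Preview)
Your plan is the paper's proof. Both argue necessity via the counting sketch that precedes the theorem, and both prove sufficiency by maintaining enough-arrows (your (I1)) and connectivity (your (I2)) as loop invariants, with the running ``start portfolio'' taken to be the queue-target vector $r$; at termination $Q=\emptyset$ gives $r\equiv 0$, and connectivity then forces $q\equiv 0$. The paper's two-case check for (I1) is exactly your bookkeeping paragraph, and your termination step is the correct reading of the paper's compressed final sentence (the paper writes ``$\mathbf{0}=\mathbf{s}\ge\mathbf{q}\ge\mathbf{0}$'' and attributes it to (\ref{eq:enougharrows}), but what is really being used is your (I2)).

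The one place you diverge is in how seriously you treat (I2). The paper dismisses its preservation in a single clause (``Connectivity is automatically preserved''), whereas you flag the discard step as the crux. Your instinct that this deserves an argument is sound; the paper does not supply one. Two small corrections to your sketch, though. First, in the \textsc{UseEdge} case you invoke ``(I2) for that in-neighbor'' \emph{after} the step, which is circular; fix it by applying (I2) \emph{before} the step to the in-neighbor $u$, and then observing that any witnessing path that happened to start at $v$ re-anchors at an out-neighbor of $v$, which has just entered $\mathrm{supp}(r_{\mathrm{new}})$. Second, in the discard case your description of the orphaned set $S$ is slightly off: (I1) does not force \emph{every} in-neighbor of $w\in S$ to lie in $S$, only that \emph{some} in-neighbor in $\mathrm{supp}(q)$ does (and that one must then lie in $S$). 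With that correction your contradiction strategy is viable, and under the paper's standing convention that zero-quota vertices are excised from $G$ up front (so the discarded vertex $v_0$ has already been fully visited), you do not need to unwind the entire run: it suffices to combine (I2) just before the step with (I1) just after it to re-anchor any orphaned $w$. The ``hardest case analysis'' you anticipate is shorter than you fear, but you are right that the paper skips it.
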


\begin{proof}
We have already argued the necessity of these conditions. The converse
is essentially by induction on $\sum_{v,w} q(v) m_{vw}$, and will follow
from the fact that connectivity and having enough arrows are invariant
under the main loop. Connectivity is automatically preserved.

So suppose we have enough arrows entering the main loop.
At each iteration, the
queue $Q$ represents an effective ``at most'' start portfolio;
so let $s(v)$ denote the number of edges $e$ in $Q$ with $t(e)=v$.
Before the
\textsc{QueueExtract}, we have $s(v)>0$; it decreases by
one with the extraction. We consider two cases:

Case 1: $q(v)=0$. In this case inequality in the $v$-th coordinate of
(\ref{eq:enougharrows}) continues to hold since the right-hand-side is
zero; all other coordinates in the inequality are unchanged. So
(\ref{eq:enougharrows}) is preserved in this case.

Case 2: $q(v)>0$. In this case \textsc{VisitVertex} adds, for each
$w$, $m_{vw}$ edges $v \to w$ into $Q$, and decrements $q(v)$. Thus the
increase in $s$ and the decrease in the sum on the left-hand side of
(\ref{eq:enougharrows}) exactly cancel out.

Hence both connectedness and having enough arrows are preserved. At
the end of the algorithm, there are no edges left in $Q$;
(\ref{eq:enougharrows}) implies $\textbf{0}=\bs\ge\bq\ge\textbf{0}$,
that is, we have reduced all the quotas to zero, and the algorithm has
succeeded.
\end{proof}

\subsubsection*{Remarks}
We revisit the special case of ordinary graph search of a directed
graph $G$ from a particular vertex $*$. Assume all vertices are
reachable from $*$. We have $q(v)=1$ for all $v\in V(G)$. But, by
connectivity, each vertex in $G$ must either have an edge coming into
it, or must be the start vertex $*$. Thus, in this special case,
having enough arrows is a consequence of connectivity, explaining why
the issue does not become apparent for ordinary graph traversal.

The enough arrows theorem has a very similar flavor to the following
theorem \cite[Theorem 5.6.1]{stanley2001enumerative} characterizing
directed graphs with Eulerian circuits; namely, a global connectivity
condition and a local degree condition. We state it here since
we'll need it in section \ref{sec:mqf}.

\begin{theorem}
\label{eulertheorem}
 A digraph without isolated vertices is Eulerian if
and only if it is connected and balanced (i.e.~
$\textrm{indeg}(v)=\textrm{outdeg}(v)$ for all vertices $v$.)
\end{theorem}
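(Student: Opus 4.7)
The standard approach splits into necessity and sufficiency.

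For necessity, suppose $G$ admits an Eulerian circuit $C$. Since $C$ uses every edge and $G$ has no isolated vertices, every vertex appears on $C$, and any two vertices are joined by a subpath of $C$, giving connectivity. For balance, fix a vertex $v$ and observe that each occurrence of $v$ as an interior point of $C$ accounts for exactly one incoming and one outgoing edge; if $v$ is the common start/end of $C$, the terminal arrival and the initial departure still pair up one incoming edge with one outgoing edge. Summing over all occurrences of $v$ in $C$ and using the fact that every edge is used exactly once gives $\mathrm{indeg}(v)=\mathrm{outdeg}(v)$.

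For sufficiency, I plan a Hierholzer-style construction. Starting from any vertex $v_0$, greedily traverse unused outgoing edges; the key observation is that balance forces any maximal edge-simple walk to close up at $v_0$. Indeed, whenever the walk enters some $v\ne v_0$, it has used strictly more incoming than outgoing edges at $v$, so by $\mathrm{indeg}(v)=\mathrm{outdeg}(v)$ an unused outgoing edge remains and the walk can be extended. Hence the walk can only terminate back at $v_0$, producing a closed walk $C_0$. If $C_0$ uses every edge we are done; otherwise, I invoke connectivity to locate a vertex $u$ on $C_0$ with an incident unused edge (here one needs the underlying connectivity hypothesis together with the fact that the subgraph of unused edges is itself balanced, so if it has any edges it has both an outgoing and an incoming one at some vertex reachable from $V(C_0)$). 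Starting from $u$, build another closed walk $C_1$ in the graph of unused edges by the same argument, and splice $C_1$ into $C_0$ at $u$ to get a longer closed walk. Iterating (each iteration strictly reduces the number of unused edges) produces an Eulerian circuit in finitely many steps.

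The main obstacle is the splicing step: one must verify that the subgraph of unused edges remains balanced after removing a closed walk (which is immediate, since any closed walk contributes equally to in- and out-degree at each vertex), and that as long as unused edges remain, some such edge is incident to the current walk. The latter is exactly where connectivity of $G$ (not merely of the unused subgraph) is used: pick any edge $e=(x,y)$ not yet used; by connectivity there is a directed path in $G$ from some vertex of $C_0$ to $x$, and the first edge along that path which is unused provides the desired attachment point on the current closed walk. Termination is guaranteed because the total number of edges is finite and each splice consumes at least one previously unused edge.
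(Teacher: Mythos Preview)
Your proof is correct and follows the standard Hierholzer argument. However, there is nothing to compare it against: the paper does not prove this theorem at all. It is quoted verbatim as \cite[Theorem 5.6.1]{stanley2001enumerative} and invoked later in Section~\ref{sec:mqf} as a black box, with no proof supplied.

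One small point worth tightening in your write-up: in the splicing step you appeal to a \emph{directed} path from $C_0$ to $x$, but the hypothesis ``connected'' in this statement is typically read as weak connectivity. This is harmless, since a balanced, weakly connected digraph is automatically strongly connected (look at a source component in the condensation and count edges), but you should say so explicitly. Also, your phrase ``the first edge along that path which is unused'' tacitly assumes such an edge exists; if every edge of the path is already in $C_0$, then $x$ itself lies on $C_0$ and $e=(x,y)$ is the attachment edge directly. Both are easy fixes, not genuine gaps.
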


\section{Applications}
\label{sec:applications}
\subsection*{DFA expansion and Myhill-Nerode class sizes}
A \df{deterministic finite-state automaton}, or DFA, is a tuple
$\cD=(S,\Sigma,\d,i,a)$, where $S$ is a finite set of \df{states},
$\Sigma$ is an alphabet, $\d:S\times\Sigma\to S$ is the \df{transition
  map}, $i\in S$ is the \df{initial state}, and $a\subset S$ are the
\df{accept states}. It is useful to think of a DFA as a directed
multigraph over $S$; for each $i\in S$ and $s\in\Sigma$ there is a
directed edge from $i$ to $\d(i,s)$ with label $s$.

The transition map $\d$ has a unique extension to a map
$\d:S\times\Sigma^*\to S$ satisfying
$$\d(s,w_1 w_2)=\d(\d(s,w_1),w_2)$$
for all states $s$ and strings $w_1$, $w_2\in\Sigma^*$.
\footnote{That is, $\d$ defines a semigroup action of $\Sigma^*$ on
  $S$.} ($\d(s,w)$ just starts at $s$ and then applies the unary
operators specified by the symbols in $w$.) The automaton $\cD$
\df{accepts} a string $w$ iff $\d(i,w)\in a$; that is, the path
defined by $w$, starting at the initial state, ends at an accept
state. The automaton is called \df{connected} if the extension
$\d:S\times\Sigma^*\to S$ is onto; that is, all states are reachable
by some path from the initial state.  The set of strings accepted by
$\cD$ is called \df{the language recognized by $\cD$}.  For the
purposes of this paper, a language to be \df{regular} iff it is
recognized by some DFA.

Given a regular language $\cL$, the Myhill-Nerode theorem
\cite[Ch.~3]{hopcroftullman1979} implies that there is a unique
minimal DFA $\cD_\cL$ which recognizes $\cL$. Furthermore, if $\cD$ is
any connected DFA recognizing $\cL$, then there is a quotient map
$\phi:\cD\to\cD_\cL$ which is a homomorphism in the sense of universal
algebra \cite{burris1981course}. That is, $\phi$ maps each state of
$\cD$ to a state of $\cD_\cL$, such that transitions are preserved:
\begin{equation}
\label{eq:homomorphism}
\d(\phi(v),s)=\phi(\d(v,s)) \textrm{\ for $v\in\cD$, $s\in\Sigma$}
\end{equation}
Not surprisingly, $\cD_\cL$ is connected (for if it had unreachable
states, those could be omitted to yield a smaller automaton
recognizing $\cL$.)

As in \cite{fink2017streaming}, we might want to be able to construct
and count larger DFAS recognizing $\cL$. We can use the enough arrows theorem to effectively characterize the
possible sizes of the Myhill-Nerode equivalence classes in a connected
DFA recognizing a language $\cL$.  If $\cD$ is connected, all of its
states are reachable by a graph search from the initial state of
$\cD$.  The Myhill-Nerode theorem implies that the corresponding graph
search tree in $\cD$ corresponds to a quota search in $\cD_\cL$, with
the quota for each state in $\cD_\cL$ being the Myhill-Nerode
equivalence class size. Therefore, the graph of $\cD_\cL$, with these
quotas and the start portfolio consisting of just the initial state,
must satisfy (\ref{eq:enougharrows}).

Furthermore, the converse direction of the theorem implies that
\emph{any} collection of class sizes having enough arrows is
achievable, since the connectedness of the minimal DFA $\cD_\cL$ is
automatic. The quota search tree that witnesses the connectivity of
$\cD$ represents a construction of part of the transition map $\d$ for
$\cD$, but there will be transitions that need assigning. The
remaining transitions can be assigned completely arbitrarily, subject
to the homomorphism constraint (\ref{eq:homomorphism}).  This not only
characterizes the sizes of the Myhill-Nerode classes that can arise in
a DFA recognizing $\cL$, it yields an efficient algorithm for
constructing all DFAs realizing those sizes, when the ``enough arrows''
condition holds. We refer to this process as \df{quota-based DFA
  expansion.}

We emphasize that satisfying the connectivity and enough arrows
conditions does \emph{not} guarantee connectivity of a given extension
structure. In particular, it
is not true that if $\cD$ is a connected DFA, and $\cD'\to\cD$ is a
quotient map with preimage sizes satisfying (\ref{eq:enougharrows}),
then $\cD'$ is connected. But the existence of some connected $\cD'$ is
guaranteed.

\subsubsection*{Example: the Fibonacci language}
At the top of Figure \ref{fig:FibonacciDFAExample} is the minimal DFA
$\cD_\cL$ recognizing the ``Fibonacci language'' $\cL$ of strings over
$\Sigma=\{a,b\}$ without two consecutive $b$'s. We expand this DFA to
obtain one with Myhill-Nerode class sizes $3$, $2$ and $3$
respectively, which satisfies the ``enough arrows'' condition
(\ref{eq:enougharrows}) since
$$(1\ 0\ 0) + (3\ 2\ 3)
\begin{pmatrix}1 & 1 & 0\\ 1 & 0 & 1 \\ 0 & 0 & 2\end{pmatrix}
= (6\ 3\ 8)\ge (3\ 2\ 3).$$ Select an initial node for $\cD$ which
maps down to the initial node of $\cD_\cL$, and do a quota search; the
red arrows in the lower diagram in Figure
\ref{fig:FibonacciDFAExample} show the results of a (breadth-first)
quota search. This leaves some remaining transitions which can be
filled in arbitrarily, as long as they map to the correct class. One
set of choices for these arrows is shown in green.

The enough arrows theorem allows us to precisely characterize the
possible class size vectors $(x,y,z)$. (\ref{eq:enougharrows})
requires
$$(1+x+y,x,y+2z)\ge(x,y,z)$$ coordinatewise; the first and last of
these are vacuous
(in general, nodes with self-loops give a vacuous constraint). So the necessary and
sufficient condition for $(x,y,z)$ to be the Myhill-Nerode class sizes
of an automaton recognizing $\cL$ is simply that $x\ge y\ge1$.

\begin{figure}
\caption{Expanding the Fibonacci DFA to a larger connected DFA via
quota search. (a) The original DFA, with quotas $(3,2,3)$; (b) the
expanded DFA. The red edges form a quota tree, guaranteeing
connectivity; the green edges are a random completion to a DFA.}
\label{fig:FibonacciDFAExample}
\begin{tabular}{ccc}
\raisebox{2em}[0pt][0pt]{(a)}& 
\begin{tikzpicture}
[auto, scale=2]
\node at (0,0) [dot=blue] (1) {};  \node [below] at (1.south) {$3$};
\node at (0,0) [ddot=blue] {};
\node at (1,0) [dot=blue] (2) {};  \node [below] at (2.south) {$2$}; 
\node at (1,0) [ddot=blue] {};
\node at (2,0) [dot=blue] (3) {};  \node [below] at (3.south) {$3$};

\node [left of=1] {} edge [->] (1);

\path[->]
 (1) 
    edge [loop above] node [above] {a} (1)
    edge [bend left] node {b} (2)
 (2)
    edge [bend left] node {a} (1)
    edge node {b} (3)
 (3)
    edge [loop right] node {a,b} ();
\end{tikzpicture}
\\
\raisebox{4em}[0pt][0pt]{(b)}& 
\begin{tikzpicture}
[auto, scale=2]
\definecolor{darkgreen}{rgb}{0,0.5,0}
\node at (0,1) [dot] (1a) {};    \node at (0,1) [ddot] {};
\node at (0,0.5) [dot] (1b) {};  \node at (0,0.5) [ddot] {};
\node at (0,0) [dot] (1c) {};    \node at (0,0) [ddot] {};
\node at (1,0.75) [dot] (2a) {}; \node at (1,0.75) [ddot] {};
\node at (1,0.25) [dot] (2b) {}; \node at (1,0.25) [ddot] {};
\node at (2,1) [dot] (3a) {};
\node at (2,0.5) [dot] (3b) {};
\node at (2,0) [dot] (3c) {};

\node [left of=1a] {} edge [->] (1a);

\path[->]
 (1a) 
    edge [red,bend right] node [left] {a} (1b)
    edge [red] node {b} (2a)
 (1b)
    edge [darkgreen] node [right] {a} (1a)
    edge [red] node [near start] {b} (2b)
 (1c)
    edge [darkgreen] node [left] {a} (1b)
    edge [darkgreen] node [near end] {b} (2b)
 (2a)
    edge [red] node [right,near start] {a} (1c)
    edge [red] node {b} (3a)
 (2b)
    edge [darkgreen, bend left] node {a} (1c)
    edge [darkgreen] node {b} (3b)
 (3a)
    edge [red] node {a} (3b)
    edge [red, bend right] node [left, pos=0.1] {b} (3c)
 (3b)
    edge [darkgreen] node {a,b} (3c)
 (3c)
    edge [darkgreen, loop right] node {a,b} ();

\node[draw=blue,dashed,inner sep=3pt, shape=ellipse, fit=(1a) (1b) (1c)] {};
\node[draw=blue,dashed,inner sep=3pt, shape=ellipse, fit=(2a) (2b)     ] {};
\node[draw=blue,dashed,inner sep=3pt, shape=ellipse, fit=(3a) (3b) (3c)] {};
\end{tikzpicture}
\end{tabular}
\end{figure}
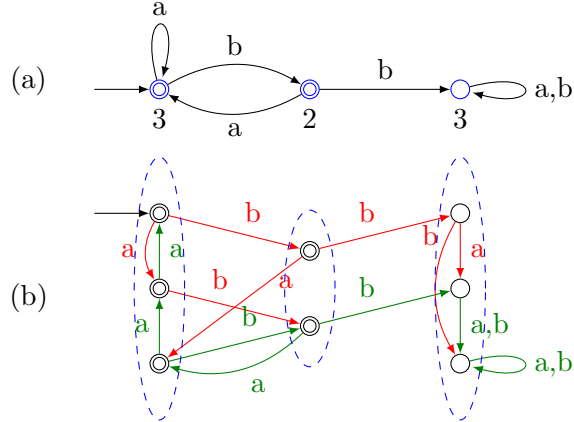

Quota-based DFA construction achieves the goal of effectively
generating random connected DFAs recognizing $\cL$, with specified
Myhill-Nerode equivalence class sizes, in such a way that any
connected DFA can in principle be produced.  The Myhill-Nerode theorem
guarantees that any connected DFA $\cD$ recognizing $\cL$ has a
quotient map down to the (connected) minimal DFA $\cD_\cL$.  The
connectivity of $\cD$ is witnessed by some search tree $\cT$ in the
universal path cover of $\cD$.  If we randomize \textsc{QueueExtract}
so that it returns a randomly selected element of $Q$, we guarantee
that quota search can return $\cT$ as the search forest.  (At this
point, connectivity of $\cD$ implies that the Myhill-Nerode class
sizes must satisfy the ``enough arrows'' condition.) By assigning the
remaining transitions randomly, we guarantee that quota-based DFA
expansion can produce $\cD$. This proves the following theorem:

\begin{theorem}[universality of quota-based DFA expansion]
Let $\cL$ be a regular language, and let $\cD$ be a connected DFA
recognizing $\cL$.  Then:
\begin{itemize}
\item $\cD$ can be constructed from the minimal DFA $\cD_\cL$ by
  quota-based DFA expansion;
\item the Myhill-Nerode equivalence class sizes of $\cD$ must satisfy
  the ``enough arrows'' condition, with the start portfolio being one
  copy of the initial state of $\cD$.
\end{itemize}
\end{theorem}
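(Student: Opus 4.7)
The plan is to pull back the structure of $\cD$ to the minimal DFA $\cD_\cL$ via the Myhill-Nerode quotient, extract a quota search, and then show every edge outside the search tree can be filled in consistently with the homomorphism constraint. Fix the quotient map $\phi:\cD\to\cD_\cL$ guaranteed by Myhill-Nerode, and let $q(v)=|\phi^{-1}(v)|$ for each state $v\in\cD_\cL$. Since $\cD$ is connected, an ordinary graph search on $\cD$ starting from its initial state $i(\cD)$ produces a spanning arborescence $\cT\subset\cD$.

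The key observation is that composing $\cT\hookrightarrow\cD$ with $\phi$ yields an immersion $\cT\to\cD_\cL$: two distinct edges out of a state of $\cD$ have distinct labels (because $\cD$ is deterministic), and their images in $\cD_\cL$ have those same labels and hence remain distinct. Since $\cT$ visits every state of $\cD$ exactly once and since $q(v)=|\phi^{-1}(v)|$, the image tree visits each $v\in\cD_\cL$ exactly $q(v)$ times. Thus $\phi(\cT)$ is a quota tree over $\cD_\cL$ with quota $q$ and start portfolio $s$ consisting of a single copy of the initial state $i(\cD_\cL)=\phi(i(\cD))$. By the necessity direction of Theorem \ref{thm:maintheorem} (Enough Arrows), $(\cD_\cL,q,s)$ satisfies (\ref{eq:enougharrows}), which gives the second bullet.

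For the first bullet, I would argue that this same $\cT$ can be produced by a run of Algorithm \ref{alg:genericquotasearch} on $(\cD_\cL,q,s)$. Randomize \textsc{QueueExtract} so that its answer is chosen uniformly at random among candidates in $Q$; then with positive probability the run can reproduce, step for step, the order in which the graph search on $\cD$ added edges (after pushing each edge forward through $\phi$ and lifting it to the universal out-cover $\widetilde{\cD_\cL}$ via the unique lift compatible with the previously produced tree). This gives a run of quota search whose output forest is isomorphic, as an immersed tree, to $\phi(\cT)$.

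Finally, the edges of $\cD$ not in $\cT$ must be assigned during the ``completion'' phase of quota-based DFA expansion. Each non-tree transition $\d_\cD(u,\sigma)=u'$ is forced by $\phi$ to project to the transition $\d_{\cD_\cL}(\phi(u),\sigma)=\phi(u')$, so the homomorphism condition (\ref{eq:homomorphism}) restricts the target of each such transition to the class $\phi^{-1}(\phi(u'))$. Since $\cD$ actually picks one such target, it is a legal choice among the options offered during completion, and so the expansion procedure can produce $\cD$ exactly. The one subtlety to verify carefully is that no state of $\cD$ is missed by $\cT$ and no transition is ``double-counted'' when comparing the search tree on $\cD$ with the quota search on $\cD_\cL$; both follow immediately from the fact that $\cT$ is a spanning tree of $\cD$ and that $\phi$ is a fiberwise injection on outstars. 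The main obstacle to a completely rigorous write-up is formalizing the step that lifts a graph-search run on $\cD$ to a quota-search run on $\cD_\cL$ lifted to the universal out-cover, which requires care in tracking the bijection between sentinel/queue edges on the two sides.
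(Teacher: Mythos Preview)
Your proposal is correct and follows essentially the same route as the paper: obtain a spanning tree $\cT$ of $\cD$ by graph search, push it down via the Myhill-Nerode quotient $\phi$ to get a quota tree over $\cD_\cL$ (so the enough-arrows condition holds by necessity), observe that a randomized \textsc{QueueExtract} can reproduce $\cT$ in quota search, and then complete the remaining transitions subject to the homomorphism constraint. If anything, your write-up is more explicit than the paper's---in particular your justification that $\phi\circ(\cT\hookrightarrow\cD)$ is an immersion via distinct edge labels is a detail the paper leaves implicit.
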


We remark that even when $\cT$ is chosen uniformly from the set of
quota trees achieving the given M-N class sizes, the resulting DFA
is not sampled uniformly from the connected DFAs with these class
sizes, as different DFAs admit different numbers of spanning trees.
In principle this method could be combined with standard methods
such as Markov Chain Monte Carlo. Efficient uniform DFA generation
is a topic for further research.

\subsection*{$k$ shortest paths}
It turns out that quota search very naturally solves the problem of
constructing a tree which contains the $k$ shortest (or lightest,
if edges are weighted) paths from a source node (or portfolio)
to vertices in a graph $G$.%
\footnote{See \cite{eppstein1998finding} for efficient algorithms and
numerous references. Numerous clever methods have been developed
in connection with this problem; these are no doubt also applicable
in the more general context of quota search.}
For example, when the edge weights are nonnegative, a solution is
to use Dijkstra quota search (DQS) with all quotas initialized to $k$.

For simplicity we assume that the source portfolio is a single vertex
$\ast$, so we're building a tree $T$. Viewed as operating in the
universal path cover $\tilde G$, for each encounter with a vertex
$v=f(t(e))$, DQS keeps track of the distance from $\tilde\ast$ to a
corresponding lift $\tilde v$ in $\tilde G$. The edge $\tilde e$
de-queued at each step extends a path $\tilde p$ in $T$ to a path
$\tilde p\tilde e$ which minimizes
this distance; $\tilde e$ is added to $T$ if $q(v)>0$.  The point now is
that if we have a path to $v$ which is among the $k$ lightest, then
we may assume all initial subpaths are among the lightest $k$ paths
to their corresponding endpoints, and are in $T$ by construction. Thus,
by setting the quota at every vertex to $k$, we are guaranteed that the
quota tree consists of a set of $k$ lightest paths to all vertices.

DQS also solves the network configuration problem in section
\ref{sec:examples}, although since we are minimizing the number of
edges in paths rather than their weighted lengths, breadth-first
quota search gives a simpler solution. As remarked earlier, the
coupon problem described in section \ref{sec:examples} is an
example of the minimum quota arborescence problem; its solution
requires an analogue of Edmonds' algorithm \cite{edmonds1967optimum},
which we will discuss in section \ref{sec:mqf}.

\section{Counting quota trees}
\label{sec:enumeration}

The enumeration of spanning trees is well understood. The most
fundamental result, the matrix-tree theorem, expresses the number of
(directed) spanning trees of a graph $G$ as a principal minor of the
Laplacian of $G$. As a special case, one obtains Cayley's classical
formula that the complete graph $K_n$ has $n^{n-2}$ spanning trees
with a specified root. These turn out to be special cases of a more
general result for quota trees.

As usual, $G$ is a directed (multi)graph, possibly with loops,
having $m_{ij}$ distinct
edges from vertex $i$ to vertex $j$. Let $q:V(G)\to\Z$ be a quota
function, $s:V(G)\to\Z$ a start portfolio, and $M=(m_{ij})$ the
adjacency matrix of $G$. The following symbol is indispensable in
expressing counts of quota trees.

\newcommand{\ba}{\mathbf{a}}
\renewcommand{\bb}{\mathbf{b}}

Given a directed multigraph $G$ with $n\times n$ adjacency matrix
$M=(m_{ij})$, and $n$-long vectors $\ba=(a_i)$ and $\bb=(b_i)$, define
the \df{quota symbol}
\begin{equation}
\label{qsymbol}
\qsymbol{\ba}{\bb}{G} :=
\det M(\ba,\bb)\prod_i \binom{a_i}{b_i}(a_i)^{-1},
\end{equation}
where the binomial coefficient $\binom nk$ is zero unless $0\le k\le n$,
the matrix $M(\ba,\bb) = \diag(\ba)-M \diag(\bb)$,
and for any index $i$ with $a_i=0$ we omit the factor
$a_i^{-1}$ and delete the corresponding row and column of $M(\ba,\bb)$.
(We remark that loops in $G$ do not affect $M(\ba,\bb)$ but do affect
the binomial coefficients.)

\begin{theorem}[counting quota forests]
\label{countingforests}
Let $G$, $q$ and $s$ be as above. As in the enough arrows condition
(\ref{eq:enougharrows}), set
$\In_j=\sum_i q_i m_{ij} = \bq M$ where $M=(m_{ij})$ is the adjacency
matrix of $G$. Then the number of quota forests with
quota $\bq$ and start portfolio \textbf{exactly} $\bs$ is given by
$$\qsymbol{\In}{\bq-\bs}{G}.$$
\end{theorem}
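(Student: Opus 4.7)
The plan is to enumerate $(G,q,s)$-forests by writing down a system of functional equations for tree generating functions, applying the Lagrange--Good multivariate inversion formula, and matching the resulting coefficient with the quota symbol.

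For each vertex $v$ I introduce a marker variable $t_v$ and let $T_v = T_v(\bt)$ be the ordinary generating function in which the coefficient of $\bt^{\bq}$ counts quota trees rooted at $v$ with quota $\bq$. A quota tree rooted at $v$ consists of a root (contributing $t_v$) together with, for each outgoing edge $e \in \outst v$, an independent choice to omit $e$ or to attach $e$ with a quota subtree at its endpoint. This yields the functional equation
\[
T_v = t_v\,\phi_v(\mathbf{T}),\qquad \phi_v(\mathbf{T}) := \prod_u (1 + T_u)^{m_{vu}},
\]
and because roots in a forest are distinguishable, the $(G,q,s)$-forest count equals $[\bt^{\bq}]\prod_v T_v^{s_v}$.

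Applying Good's formula with $H(\mathbf{T}) = \prod_v T_v^{s_v}$, and using the elementary computations $\prod_v \phi_v(\mathbf{T})^{q_v} = \prod_u (1+T_u)^{\In_u}$ together with $(T_j/\phi_i)\,\partial \phi_i/\partial T_j = T_j m_{ij}/(1+T_j)$, I then clear the $(1+T_j)$ denominators column-by-column from the Jacobian to rewrite the Lagrange--Good identity as
\[
[\bt^{\bq}]\prod_v T_v^{s_v} = [\mathbf{T}^{\bq-\bs}]\,\prod_u (1+T_u)^{\In_u - 1}\,\det\bigl(\diag(\mathbf{1}+\mathbf{T}) - M\,\diag(\mathbf{T})\bigr).
\]

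What remains is to identify this coefficient with $\qsymbol{\In}{\bq-\bs}{G}$. I plan to expand the determinant via $\det(I + X) = \sum_S \det(X_S)$ applied to $X = (I-M)\diag(\mathbf{T})$, yielding $\sum_S \det((I-M)_S)\prod_{u\in S} T_u$. Extracting the coefficient of $\mathbf{T}^{\bq-\bs}$ produces a factor $\prod_u \binom{\In_u-1}{(q_u-s_u) - [u\in S]}$ for each $S$. Then expanding $\det((I-M)_S) = \sum_{U\subseteq S}(-1)^{|U|}\det M_U$ and swapping the order of summation, the inner sum over $S\supseteq U$ collapses by Pascal's identity to $\prod_{u\notin U}\binom{\In_u}{q_u-s_u}$. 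The identity $\binom{n-1}{k-1}/\binom{n}{k} = k/n$ then factors out $\prod_u \binom{\In_u}{q_u-s_u}/\In_u$, leaving precisely the principal-minor expansion of $\det(\diag(\In) - M\,\diag(\bq-\bs))$.

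The main obstacle is this last algebraic step: a careful orchestration of two principal-minor expansions together with a Pascal-style collapsing of binomial sums. A secondary wrinkle is the degenerate case $\In_u = 0$, where the symbol's conventions (omit $\In_u^{-1}$ and delete the $u$-th row and column) must match the Lagrange side, on which the lack of incoming edges forces $q_u = s_u$, collapses $T_u$ to $t_v$, and removes the corresponding row and column from the Jacobian.
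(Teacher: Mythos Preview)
Your proposal is correct and follows the same approach as the paper: the identical functional equation $w_i = t_i\prod_j(1+w_j)^{m_{ij}}$, the same choice $f(\bl)=\bl^{\bs}$, and the same application of multivariate Lagrange--Good inversion. The only difference is in the final coefficient extraction. Where you clear denominators and carry out two rounds of principal-minor expansion with a Pascal collapse, the paper observes directly that column $j$ of the Jacobian matrix $\bigl(\delta_{ij}-m_{ij}\lambda_j/\beta_j\bigr)$ is affine in the single variable $\lambda_j/\beta_j$; hence, using $[\lambda_j^{k}]\bigl\{(\lambda_j/\beta_j)\beta_j^{n}\bigr\}=(k/n)\binom{n}{k}$, the coefficient extraction amounts to the one-line substitution $\lambda_j/\beta_j\mapsto (q_j-s_j)/\In_j$ inside the determinant, with the binomial product $\binom{\In}{\bq-\bs}$ factored out. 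Your expansion is a valid unrolling of exactly this substitution, so the two computations are equivalent, but the paper's version avoids the double minor expansion entirely.
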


The determinant arising in this theorem has a natural combinatorial
interpretation, which we will need. It represents the (weighted)
counts of spanning forests of the subgraph of $G$ determined by the
support of $q$, relative to the start portfolio $s$. In particular,
the determinant is nonzero precisely when the triple $(G,q,s)$
is connected.
To state this precisely, given weights on the edges and vertices of a graph,
define the weight of a tree to be the weight
of its root times the product of the weights of the edges it contains,
and the weight of a forest to be the product of the weights of its
component trees.

\begin{theorem}[matrix interpretation]
\label{matrixinterpretation}\ \\
$\det M(\In,\bq-\bs)$ is the sum of the
weights of all spanning forests of $G$, where vertex $i$ has weight $s_i$,
and an edge $i\to j$ has weight $q_j-s_j$.
\end{theorem}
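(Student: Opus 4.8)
The plan is to recognize $\det M(\In,\bq-\bs)$ as a weighted version of the all-minors matrix-tree theorem. Recall $M(\In,\bq-\bs) = \diag(\In) - M\,\diag(\bq-\bs)$. With the stated weighting — vertex $i$ has weight $s_i$ and the edge $i\to j$ has weight $q_j - s_j$ — I want to show this matrix is (up to the usual reindexing over the support of $q$) the Laplacian of a suitably weighted digraph whose directed spanning forests rooted at the support of $s$ are counted by the determinant. First I would restrict attention to the vertex set $U = \{i : q_i > 0\}$, since the definition of the quota symbol already deletes rows and columns with $a_i = 0$, and $\In_i = 0$ forces $q_i = 0$ by the enough-arrows bookkeeping; on $U$ the diagonal entry is $\In_j = \sum_i q_i m_{ij}$, which I want to interpret as ``total outflow into $j$ from its weighted in-edges.'' The key identity to verify is that $\In_j$ equals the weighted out-degree-style sum $s_j + \sum_{i} (q_j - s_j) m_{ij}$ — wait, that is not literally an out-degree; rather, the correct reading is that $\diag(\In) - M\diag(\bq-\bs)$ has row sums (or column sums) equal to the vertex weights $s$, which is exactly the defining feature of a Laplacian-with-roots whose minor computes root-weighted spanning forests.

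Concretely, the second step is the row/column-sum computation: the $j$-th column sum of $M(\In,\bq-\bs)$ is $\In_j - \sum_i m_{ij}(q_j - s_j) = \In_j - q_j\sum_i m_{ij} + s_j \sum_i m_{ij}$. Here I would use $\In_j = \sum_i q_i m_{ij}$ and note this does \emph{not} simplify to $s_j$ in general — so the honest approach is different: I should instead directly invoke the weighted all-minors matrix-tree theorem (the Chaiken--Chen form), which says that for a digraph with edge weights $w_{ij}$, the matrix $\diag(d) - W$, where $d_j = \sum_i w_{ij}$ and $W = (w_{ij})$, has the property that deleting the rows/columns in a set $R$ and taking the determinant yields the sum over spanning forests rooted at $R$ of the product of edge weights. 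The trick is that here the ``graph'' is $G$ with edge $i\to j$ carrying weight $(q_j - s_j)m_{ij}$ summed over the parallel edges — but the diagonal is $\In_j$, not $\sum_i (q_j-s_j)m_{ij}$. So I would write $\In_j = \sum_i q_i m_{ij}$ and absorb the discrepancy by adding a formal ``root edge'' from an external source into each vertex $j$ of weight $s_j$, so that the new weighted in-degree becomes $\sum_i (q_j - s_j) m_{ij} + (\text{correction})$. The bookkeeping that makes this match is precisely $\In_j = s_j + \sum_i (q_i - s_i? )$ — this is the step I expect to be fiddly.

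The cleanest route, and the one I would actually carry out: expand $\det M(\In,\bq-\bs)$ directly by the permutation-expansion / BEST-type argument. Each term picks, for each vertex $j$ in the support, either the diagonal contribution $\In_j = \sum_i q_i m_{ij}$ or an off-diagonal $-m_{ij}(q_j - s_j)$; expanding $\In_j$ itself as a sum over choices of a predecessor $i$ (weighted $q_i$) and a parallel edge, one gets that the determinant is an alternating sum over functional digraphs on $U$ where each vertex has exactly one chosen in-edge (either a ``real'' predecessor or itself via the diagonal). Standard sign-reversing-involution cancellation on cycles (the usual proof of the matrix-tree theorem) kills everything except the forests: configurations in which the chosen in-edges form no cycle, i.e.\ a spanning forest of $U$ directed toward roots, with each non-root $j$ contributing $(q_j - s_j)$ times the multiplicity $m_{ij}$ of its chosen in-edge, and — after reconciling the $q_i$ weights coming from the $\In$ expansion with the $q_j - s_j$ weights — each root $r$ contributing $s_r$. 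The main obstacle is exactly this last reconciliation: showing that the weight collected at a root is $s_r$ rather than $q_r$ or $q_r - s_r$, which requires carefully tracking which vertex's $q$-value the diagonal term $\In$ ``charges'' its weight to. I expect this to come out by observing that a spanning forest rooted at $R$ has $|U| - |R|$ edges, so exactly $|R|$ vertices fail to supply a $(q_j - s_j)$ factor, and the $\In$-expansion supplies a compensating $q$-weight at each chosen predecessor; a counting/telescoping argument over the forest structure then yields the net root weight $s_r$. Once the combinatorial model is set up correctly, the connectivity claim (determinant nonzero iff $(G,q,s)$ connected) is immediate, since a nonzero forest-sum with the support of $s$ as allowed roots exists iff every vertex of $U$ is reachable from $\mathrm{supp}(s)$.
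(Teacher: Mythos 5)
Your proposal stalls exactly where you said it would, and that unresolved step is the entire content of the theorem. For comparison, the paper's own proof is a one-line deduction from Theorem \ref{matrixforest}: apply that theorem to the weighted digraph in which each of the $m_{ij}$ edges $i\to j$ carries weight $q_j-s_j$, so that $\diag(\bs)$ plus the Laplacian of that weighted graph is identified with $M(\In,\bq-\bs)$ (Theorem \ref{matrixforest} itself being proved by adjoining an auxiliary root $\ast$ with an edge of weight $s_i$ into each vertex $i$ and invoking the directed matrix-tree theorem). That identification requires the $j$-th column sum of $M(\In,\bq-\bs)$, namely $\In_j-(q_j-s_j)\sum_i m_{ij}$, to equal $s_j$. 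You computed this column sum, correctly observed that it does not simplify to $s_j$, and then proposed to recover the root weight $s_r$ anyway by an unspecified ``counting/telescoping argument'' over the forest structure. As written, that is a statement of hope rather than a proof: the sign-reversing involution you describe is standard and does yield a forest expansion, but it assigns each root $j$ the weight $\In_j-(q_j-s_j)\sum_i m_{ij}=\sum_i m_{ij}(q_i-q_j+s_j)$, and no telescoping converts this into $s_j$ in general.

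Indeed the reconciliation cannot be carried out, because the statement with root weight $s_j$ is false for general multigraphs. Take $G$ with two vertices, one edge $1\to 2$ and two parallel edges $2\to 1$, with $q=(2,1)$ and $s=(1,0)$. Then $\In=(2,2)$, $\bq-\bs=(1,1)$, and $\det M(\In,\bq-\bs)=\det\left(\begin{smallmatrix}2&-1\\-2&2\end{smallmatrix}\right)=2$, while the forest sum as stated is $s_1s_2+s_1(q_2-s_2)+2s_2(q_1-s_1)=1$. The value $2$ is the correct one (it matches the two quota trees counted by Theorem \ref{countingforests}), so it is the stated forest interpretation, not the determinant, that is off; replacing the root weight $s_j$ by $\sum_i m_{ij}(q_i-q_j+s_j)$ repairs the example. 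The two weightings happen to agree in symmetric situations (e.g.\ the simple directed cycle), which is why small sanity checks can pass. So your suspicion in the middle paragraph was warranted: the paper's ``follows immediately'' elides precisely the column-sum discrepancy you found, and any completed argument must either correct the stated root weights or add hypotheses under which the two weightings coincide; it should also then revisit the connectivity corollary, which relies on the roots carrying weight $s_j$.
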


This result follows immediately from the following ``matrix-forest theorem,''
which is equivalent to (but much more symmetric than) the usual
``matrix-tree theorem'' \cite[Thm.~5.6.4]{stanley2001enumerative}:

\begin{theorem}[matrix-forest]
\label{matrixforest}
Let $G$ be a directed multigraph with adjacency matrix $M=(m_{ij})$. Define
the Laplacian $\Delta G$ to be $\diag(\In)-M$, where
$\In_j=\sum_im_{ij}$. Then for indeterminates $\bs=(s_i)$,
$\det(\diag(\bs)+\Delta G)$ is the sum of the weights of all spanning forests
of $G$, where the weight of an edge $i\to j$ is $m_{ij}$ and the weight of a
vertex is $s_i$.
\end{theorem}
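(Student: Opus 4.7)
My plan is to reduce to the standard matrix-tree theorem \cite[Thm.~5.6.4]{stanley2001enumerative} by introducing a phantom root vertex. Since both sides of the claimed identity are polynomials in $\bs=(s_i)$, it suffices to establish it when the $s_i$ are nonnegative integers; those values admit a direct combinatorial interpretation as edge multiplicities.

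First I would form an augmented digraph $G^*$ on the vertex set $V(G)\cup\{*\}$, obtained from $G$ by adding $s_i$ parallel edges from $*$ to each $i\in V(G)$ (with no edges into $*$ and no loop at $*$). Then I would compute the in-Laplacian $\Delta G^*$ block-wise: its column indexed by $*$ is zero since $\In^*_*=0$; its $*$-row has $0$ on the diagonal and $-s_j$ at column $j\in V(G)$; for $i,j\in V(G)$ the off-diagonal entries are unchanged, while the diagonal entry at $j$ gains the new incoming edges from $*$, so $\In^*_j = \In_j + s_j$. Consequently, deleting the row and column indexed by $*$ from $\Delta G^*$ leaves exactly $\diag(\bs)+\Delta G$.

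The key combinatorial step is a weight-preserving bijection between spanning out-arborescences of $G^*$ rooted at $*$ and spanning out-forests of $G$ weighted as in the theorem. Given such an arborescence $T^*$, deleting $*$ and its outgoing edges produces an out-forest $F$ of $G$ whose roots are the former children of $*$; the product of edge multiplicities of $T^*$ factors as $\prod_r s_r \cdot \prod_j m_{i_j,j}$, with $r$ ranging over roots of $F$ and $j$ over non-roots with parent $i_j$ in $F$. This matches the prescribed forest weight exactly, and the inverse map simply attaches $*$ as the parent of every root of $F$, choosing one of the $s_r$ available edges.

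Finally, applying the matrix-tree theorem to $G^*$ at the root $*$ expresses the total weight of these arborescences as the cofactor obtained by deleting the $*$-row and $*$-column of $\Delta G^*$, which by the block computation equals $\det(\diag(\bs)+\Delta G)$. The main thing to track carefully is the treatment of loops: a loop at $j$ contributes to $\In_j$ but is cancelled on the diagonal of $\Delta G$, matching the fact that loops cannot occur in any genuine out-forest and therefore contribute to neither side. I expect the only nontrivial step is this bookkeeping; the bijection itself and the invocation of the matrix-tree theorem are routine once the block identification is in hand.
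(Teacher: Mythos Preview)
Your proof is correct and follows essentially the same route as the paper: both arguments adjoin a phantom root $\ast$ with $s_i$-weighted edges to each vertex $i$, set up the bijection between $\ast$-rooted spanning arborescences of the augmented graph and spanning out-forests of $G$, and then invoke the standard matrix-tree theorem so that the $(\ast,\ast)$ cofactor yields $\det(\diag(\bs)+\Delta G)$. Your version is somewhat more explicit (the polynomial-identity reduction to integer $s_i$, the block computation of the Laplacian, and the loop bookkeeping), but the underlying strategy is identical.
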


In particular, for any subset $I$ of the vertices, let $s_I$ denote the
monomial $\prod_{i\in I}s_i$. Then the coefficient
$[s_I]\det(\diag(\bs)+\Delta G)$
is the sum of the (edge) weights
of all spanning forests of $G$ with root set equal to $I$.

\begin{corollary}[enough arrows]
A triple $(G,q,s)$ admits an exact quota forest if and only if $q(v)\ge s(v)$
for each $v$, $(G,q,s)$ is connected, and the enough arrows condition
holds at each vertex.
\end{corollary}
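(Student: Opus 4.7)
The plan is to derive this corollary directly by reading off a characterization from the factorization of the quota symbol in Theorem~\ref{countingforests}. By that theorem, the number of exact quota forests is
$$\qsymbol{\In}{\bq-\bs}{G} = \det M(\In,\bq-\bs)\prod_{i}\binom{\In_i}{q_i-s_i}(\In_i)^{-1},$$
so it suffices to show this count is positive exactly when the three stated conditions hold.

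For necessity, I would argue each condition in turn. The inequality $q(v)\ge s(v)$ is forced because each of the $s(v)$ distinguishable trees rooted at (a lift of) $v$ already contributes one visit to $v$, namely at its own root. Connectivity is forced because any $v$ with $q(v)>0$ must appear in some component tree $T_{u,i}$; the path from the root $u$ down to the lift of $v$ in that tree projects to a directed path in $G$ from $u$ to $v$. The enough arrows condition has already been argued by the double-counting remark preceding Theorem~\ref{thm:maintheorem}.

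For sufficiency I would examine each factor of the quota symbol separately. Each binomial coefficient $\binom{\In_i}{q_i-s_i}$ is a nonnegative integer, strictly positive iff $0\le q_i-s_i\le \In_i$; the left inequality is $q(i)\ge s(i)$ and the right is precisely the enough arrows inequality at vertex $i$, so under our hypotheses every binomial factor is at least $1$. By Theorem~\ref{matrixinterpretation}, $\det M(\In,\bq-\bs)$ is the sum of weights of spanning forests with weight $s_i$ on vertex $i$ and weight $q_j-s_j\ge 0$ on edge $i\to j$, hence nonnegative; the paper states (immediately after that theorem) that this determinant is nonzero exactly when $(G,q,s)$ is connected, which gives strict positivity. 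The product of positive quantities is positive, and hence an exact quota forest exists.

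The main point requiring care, which I expect to be the only real obstacle, is the handling of vertices $v$ with $\In_v=0$, where the quota-symbol convention deletes a row/column and omits the $\In_v^{-1}$ factor. I would verify that the hypotheses still force a consistent and positive value at such vertices: the enough arrows inequality $s_v+\In_v\ge q_v$ together with $q_v\ge s_v$ and $\In_v=0$ yields $q_v=s_v$, so the ``missing'' binomial is morally $\binom{0}{0}=1$ and the convention is consistent with treating $v$ as a trivial forest component. Once this degenerate case is dispensed with, the factor-by-factor positivity argument yields the corollary without further work.
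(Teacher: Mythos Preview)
Your proposal is correct and follows essentially the same route as the paper: both arguments read off the three conditions by checking when each factor of the quota symbol in Theorem~\ref{countingforests} is nonzero, with the binomial coefficients encoding $q_i\ge s_i$ and the enough-arrows inequality, and the determinant (via the matrix-forest interpretation) encoding connectivity. Your treatment is a bit more explicit than the paper's, in particular your handling of the $\In_v=0$ degeneracy, which the paper leaves implicit; note only that in the quota-symbol convention the binomial $\binom{0}{q_v-s_v}$ is not actually omitted, so your observation that $q_v=s_v$ forces it to equal $1$ is exactly what is needed.
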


\begin{proof}
The $i$-th binomial coefficient in Theorem \ref{countingforests} is
nonzero precisely when the local ``enough arrows'' condition holds at
the $i$-th vertex and $q_i\ge s_i$.  By Theorem \ref{matrixforest},
the determinant in
Theorem \ref{countingforests} is nonzero precisely when there exists at
least one spanning forest of (the support of $q$ in) $G$ whose roots
are contained in the
support of $s$; that is, when $(G,q,s)$ is connected. The enough arrows
theorem now follows immediately from Theorem \ref{countingforests}.
\end{proof}

We remark that the quota symbol simultaneously generalizes binomial
coefficients and spanning trees. When the graph $G$ has one vertex
and no edges, the quota symbol is a single binomial coefficient.
On the other hand, for general $G$, when the quotas are all
$1$, the quota symbol $\qsymbol{\In}{\bq-\bs}{G}$ counts spanning
forests (or trees) of $G$. So it is not surprising that the symbol
satisfies a recurrence which reduces in the former case to Pascal's
rule, and in the latter case to the recurrence for counting spanning
trees by deleting and contracting an edge:
$$\#T(G)=\#T(G\setminus e) + \#T(G/e).$$
While we won't need it here, we state the recurrence for completeness;
its proof is implicit in the proof of Theorem \ref{countingextensions}.

\begin{theorem}[quota symbol recurrence]
\label{qsymbolrecurrence}
The quota symbol (\ref{qsymbol}) can be computed recursively as
follows:
$$\qsymbol{\ba}{\bb}{G}=\left\{
\begin{array}{ll}
  0,&\textrm{unless $\mathbf{0}\le \bb\le\ba$ and $\ba \ge \bb M$,
     in which case:}\\
  1,&\textrm{if $\bb=\mathbf{0}$; else}\\
  0,&\textrm{if $\ba=\bb M$;}
\end{array}
\right.
$$
otherwise
$$\qsymbol{\ba}{\bb}{G}=
 \qsymbol{\ba-\delta_i}{\bb-\delta_i}{G}+\qsymbol{\ba-\delta_i}{\bb}{G}$$
where $\delta_i$ is the vector with a $1$ in the $i$-th position and $0$
elsewhere, and $i$ is an index such that $a_i > (\bb M)_i$.
\end{theorem}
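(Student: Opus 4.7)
My plan is to verify the boundary cases directly from the definition and then reduce the main recurrence step to a single determinantal identity, which falls out of column multilinearity plus a cofactor expansion. The hypothesis $a_i > (\bb M)_i$ on the splitting index is only needed to keep the recursion inside the valid region $\mathbf{0}\le\bb\le\ba,\ \ba\ge\bb M$ and guarantee termination; the underlying algebraic identity behind the split holds for any $i$ with $a_i\ge 1$.

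Each of the boundary cases falls out of (\ref{qsymbol}): $\binom{a_j}{b_j}=0$ whenever $b_j<0$ or $b_j>a_j$; if $\bb=\mathbf{0}$ then $M(\ba,\mathbf{0})=\diag(\ba)$ and the formula collapses to $\prod_j a_j\cdot\prod_j a_j^{-1}=1$; and if $\bb\ne\mathbf{0}$ but $\ba=\bb M$, a coordinate-wise check shows $(\bb^{T}M(\ba,\bb))_j = b_j a_j - b_j(\bb M)_j = 0$, exhibiting $\bb^{T}$ as a nontrivial left null vector and forcing $\det M(\ba,\bb)=0$.

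For the recurrence step, I factor the common product $\prod_{j\ne i}\binom{a_j}{b_j}/a_j$ out of all three symbols. Using the elementary identities $\binom{a-1}{b-1}/(a-1)=(b/(a-1))\cdot\binom{a}{b}/a$ and $\binom{a-1}{b}/(a-1)=((a-b)/(a-1))\cdot\binom{a}{b}/a$, the recurrence reduces to the purely determinantal claim
$$(a_i-1)\det M(\ba,\bb)\,=\,b_i\det M(\ba-\delta_i,\bb-\delta_i)+(a_i-b_i)\det M(\ba-\delta_i,\bb).$$
Writing $A=M(\ba,\bb)$, the three matrices differ only in column $i$: the operation $\ba\to\ba-\delta_i$ subtracts $e_i$ from column $i$, while $\bb\to\bb-\delta_i$ adds the $i$-th column $m$ of $M$ to column $i$. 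By column multilinearity, $\det M(\ba-\delta_i,\bb)=\det A - D_e$ and $\det M(\ba-\delta_i,\bb-\delta_i)=\det A - D_e + D_m$, where $D_e$ and $D_m$ denote $\det A$ with column $i$ replaced by $e_i$ and $m$ respectively.

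Substituting and collecting, the right-hand side of the target identity becomes $a_i\det A + b_i D_m - a_i D_e$, so the claim collapses to $a_i D_e - b_i D_m = \det A$. But this is exactly the cofactor expansion of $\det A$ along its own $i$-th column: since the $(k,i)$ entry of $A$ is $a_k\delta_{ki}-m_{ki}b_i$, the expansion reads $\det A = a_i C_{ii} - b_i\sum_k m_{ki}C_{ki}$, which coincides with $a_i D_e - b_i D_m$ once one recognizes $C_{ii}=D_e$ and $\sum_k m_{ki}C_{ki}=D_m$. I do not anticipate any real obstacle beyond bookkeeping; the one subtlety worth checking is that the convention of deleting rows and columns for indices $j$ with $a_j=0$ is applied uniformly to all three symbols, so those suppressed coordinates do not re-enter the identity.
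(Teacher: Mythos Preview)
Your argument is essentially correct and takes a genuinely different route from the paper. The paper does not prove this theorem directly; it says the proof is ``implicit in the proof of Theorem~\ref{countingextensions}.'' That argument is combinatorial: one interprets $\qsymbol{\ba}{\bb}{G}$ (with $\ba=\In-\seen$, $\bb=\bq-\bs-\used$) as the number of quota-forest extensions of a partial search forest, and the recurrence is then nothing more than the dichotomy ``use the next dequeued edge or skip it,'' which increments $\seen$ always and $\used$ only in the first case. Your approach instead verifies the identity directly from the determinantal definition via column multilinearity and a cofactor expansion. This is more elementary and self-contained (it does not go through Theorems~\ref{countingforests} and \ref{countingextensions}), and as you note, the algebraic identity actually holds for any index $i$ with $a_i\ge1$, not just those with $a_i>(\bb M)_i$; the latter restriction is purely to keep the recursion inside the region and guarantee termination. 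The paper's route, by contrast, explains \emph{why} the recurrence has Pascal-like shape.

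One point deserves more care than you give it. You flag the deletion convention for coordinates $j$ with $a_j=0$ as ``the one subtlety,'' but the case that actually bites is the splitting coordinate $i$ itself: if $a_i=1$ then the two right-hand symbols have $(\ba-\delta_i)_i=0$ and therefore delete row and column $i$, whereas the left-hand symbol does not. Your reduction divides by $a_i-1$ and so does not literally cover this case. It does go through, but you need to observe that when $a_i=1$ the hypothesis $a_i>(\bb M)_i$ forces $(\bb M)_i=0$, hence $m_{ki}=0$ whenever $b_k>0$; combined with the fact that any column $k$ with $b_k=0$ in $M(\ba,\bb)$ is $a_k e_k$, this makes your $D_m$ vanish, and then $\det A=a_iD_e-b_iD_m=D_e$ matches the surviving right-hand term (which is the $(i,i)$ minor). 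Also note that you do not verify the blanket ``$0$ unless $\ba\ge\bb M$'' clause in the first line of the theorem; indeed the algebraic formula need not vanish there, but since your recursion never leaves the region $\ba\ge\bb M$, this clause is never invoked when computing the symbol from valid initial data.
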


Corresponding to the two variants of quota search we have described,
one might also ask for the number of at-most quota forests. (Of course
the answers agree for trees, but sometimes one or the other expression
is easier to evaluate.)

\begin{corollary}[counting at most quota forests]
\label{countingatmostforests}
Fix $G$, $\bq$ and $\bs$ as in Theorem \ref{countingforests}.
The number of $(G,\bq,\bs')$-quota trees with a start
portfolio $\bs'$ satisfying $\bs'\le\bs$ coordinatewise is given by
$$\qsymbol{\In+\bs}{\bq}{G}.$$
\end{corollary}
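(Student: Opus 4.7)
The plan is to reduce the ``at most'' count to the exact count of Theorem \ref{countingforests} via a graph-augmentation trick. I would construct an augmented digraph $G^\ast$ by adjoining a new ``super-root'' vertex $0$ to $V(G)$ and, for each $i\in V(G)$, adding $s_i$ distinct directed edges $0\to i$. Extend the data by setting $\bq^\ast=(1,\bq)$ (so $0$ has quota $1$) and $\bs^\ast=\delta_0$ (a single sentinel at $0$). Since $0$ has quota $1$ and is the only start, every $(G^\ast,\bq^\ast,\bs^\ast)$-quota forest is in fact a single tree rooted at $0$. I would count these trees in two ways.

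On one hand, apply Theorem \ref{countingforests} to $(G^\ast,\bq^\ast,\bs^\ast)$ directly. A short computation gives $\In^\ast_0=0$ and $\In^\ast_j=s_j+\In_j$ for $j\in V(G)$. Because $\In^\ast_0=q^\ast_0-s^\ast_0=0$, the $0$-th binomial factor equals $\binom{0}{0}=1$ and the $0$-th row and column are omitted from the quota-symbol matrix per its definition; what remains is precisely $M(\In+\bs,\bq)$. So the count simplifies to
$$\#\{(G^\ast,\bq^\ast,\bs^\ast)\text{-trees}\} = \qsymbol{\In+\bs}{\bq}{G},$$
the right-hand side of the corollary.

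On the other hand, I would decompose such a tree $T$ combinatorially. The lone visit at $0$ is the root of $T$, and its children correspond to an immersive selection of edges in $\outst 0$ — equivalently, a vector $\bs'$ with $0\le\bs'\le\bs$ together with a choice of $s'_i$ of the $s_i$ parallel edges $0\to i$ at each $i$, contributing $\prod_i\binom{s_i}{s'_i}$ options. Because $0$'s quota is then exhausted and $0$ has no incoming edges in $G^\ast$, each chosen edge supports a subtree living entirely in $G$; the collection of these subtrees constitutes a $(G,\bq,\bs')$-exact quota forest (with the $s'_i$ roots at $i$ inheriting the paper's labeling convention from the chosen parent edges). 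Applying Theorem \ref{countingforests} and summing yields
$$\#\{(G^\ast,\bq^\ast,\bs^\ast)\text{-trees}\} = \sum_{\bs'\le\bs}\prod_i\binom{s_i}{s'_i}\qsymbol{\In}{\bq-\bs'}{G},$$
which is exactly the at-most count, each exact $(G,\bq,\bs')$-forest being counted once for every way of embedding its $\bs'$ roots into the $\bs$ available start slots. Equating the two expressions proves the corollary.

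The main subtlety I anticipate is the bookkeeping around the $0$-th row and column when reducing $\qsymbol{\In^\ast}{\bq^\ast-\bs^\ast}{G^\ast}$ to $\qsymbol{\In+\bs}{\bq}{G}$, together with verifying that the binomial factor $\binom{s_i}{s'_i}$ produced by the immersion's injectivity on $\outst 0$ is exactly what is needed to match the labeled-slot count on the combinatorial side.
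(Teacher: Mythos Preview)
Your proposal is correct and follows essentially the same route as the paper: augment $G$ with a super-root having $s_i$ edges into vertex $i$, give it quota $1$ and make it the sole start, then apply Theorem~\ref{countingforests} and observe that the $0$-th row and column drop out of the quota symbol. Your second, combinatorial computation makes explicit the bijection that the paper merely asserts (and in particular correctly identifies the $\prod_i\binom{s_i}{s'_i}$ slot-selection factor), but the core argument is identical.
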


Before proving these results, we pause to consider some special cases.
We will let $Q_=(G,q,s)$ denote the number of quota forests of $G$ with
quota $q$ and start portfolio exactly $s$; similarly $Q_\le(G,q,s)$
will count quota forests with start portfolio at most $s$.

\subsubsection*{Example: one-vertex graphs}
Let $R_k$ denote the $k$-leaf rose, having a single vertex and $k$
loops; in this case, quotas and start portfolios are just scalars $q$
and $s$. By Theorem \ref{countingforests} and Corollary
\ref{countingatmostforests}, we find

$$
Q_=(R_k,q,s)=\left\{
\begin{array}{ll}
[q=s]&\textrm{if\ }kq=0;\\
\frac{s}{q}\binom{kq}{q-s}&\textrm{otherwise;}
\end{array}
\right.
$$
$$
Q_\le(R_k,q,s)=\left\{
\begin{array}{ll}
[q=0]&\textrm{if\ }kq+s=0;\\
\frac{s}{kq+s}\binom{kq+s}{q}&\textrm{otherwise.}
\end{array}
\right.
$$
\vspace{1ex}
It's useful to check the
at-most counts. For $k=0$, $Q_\le(R_0,q,s)=\binom sq$ as expected,
since we just select which of the $s$ possible starts get chosen.
$Q_\le(R_1,q,s)=\frac{s}{q+s}\binom{q+s}{q}=\binom{q+s-1}{q}$, as it
counts the number of $s$-tuples $(T_1,\ldots,T_s)$ where each $T_i$ is
a (possibly empty) directed path graph and the total number of nodes is $k$.
For $k=2$ each $T_i$ is now a binary tree, and 
$Q_\le(R_2,q,s)=\frac{s}{2q+s}\binom{2q+s}{q}$ is equal to the entry
$C(q+s-1,q)$ in the so-called Catalan triangle \cite[A009766]{oeis};
when $s=1$ this just counts binary trees on $n$ nodes: $1,1,2,5,14,\ldots$.
(For higher $k$ we get $k$-th level Catalan numbers; see \cite[A069269]{oeis}.)

We remark that the ordinary $q$-generating function for $Q_\le(R_k,q,s)$ is
a hypergeometric series:
\begin{eqnarray*}
  \sum_q Q_\le(R_k,q,s)z^q
  &=& 1 + \sum_{q\ge1} \frac{s}{kq+s}\binom{kq+s}{q}z^q\\
  &=& {}_kF_{k-1}\left(
    \begin{array}{c}
      \frac{s}{k},\frac{s+1}{k},\ldots,\frac{s+k-1}{k}\\
      \frac{s+1}{k-1},\frac{s+2}{k-1},\ldots,\frac{s+k-1}{k-1}
    \end{array}\bigg|
    \frac{k^k}{(k-1)^{k-1}}z\right).
\end{eqnarray*}

We also note the following relationship, which falls in the category of
``combinatorial reciprocity laws'' as described by Stanley
\cite{stanley1974combinatorial}.
When we formally substitute $-s$ in the expression for $Q_\le(R_k,q,s)$,
we obtain
$\frac{-s}{kq-s}\binom{kq-s}{q}$. When $kq\le s$, it turns out that
this counts (up to an alternating sign) the number of ways to select
a set of $q$ disjoint copies of the path graph $P_k$ in the cycle
graph $C_s$. When $k=1$, this reduces to the usual binomial coefficient
reciprocity law, namely that $\binom{-s}{q}$ and $\binom{s}{q}$ count
selections of $q$ objects from $s$ objects respectively with and without
replacement. For general $k$, this
gives a combinatorial reciprocity law for higher-order Catalan triangles.

\subsubsection*{Example: quota trees over $K_n$}
It is natural to consider a start portfolio
consisting of a single vertex, as this situation arises in the context
of spanning trees as well as deterministic automata.  We view the
complete graph $G=K_n$ as a
directed graph with adjacency matrix $J_n-I_n$ (where $J_n$ is as
usual the matrix of all $1$'s.)  We remark that quota trees over $K_n$
can be viewed as trees colored with $n$ colors, having $q_i$ nodes of
color $i$, such that a node cannot share a color with either its
parent or any of its siblings. In the special case of a constant quota
$q$ at each vertex, we get an especially nice answer:
the number of quota trees over $K_n$, with a given start vertex and
constant quota $q$ at each vertex, is
$$\binom{(n-1)q}{q}^n
  \frac{n^{n-2}}{(n-1)^{n-1}((n-2)q+1)}.$$
Taking $q=1$ yields $n^{n-2}$,
so we recover as a special case Cayley's formula for the number of
spanning trees of $K_n$ with a specified root.

\subsubsection*{Example: quota trees over $K_n$ with loops}
Loops don't enter into spanning trees, but are relevant to quota
forests. We remark that loops do not affect the determinant in the
definition of the quota symbol (\ref{qsymbol}), but they do affect the
rest of the terms. As an example, let $K_n^\circ$ be the graph $K_n$
with a loop added at each vertex, so that the adjacency matrix
is the all-ones matrix. Its quota trees correspond to
tree-colorings as in the preceding example, except that a node
is now allowed to share a color with its parent. When the quota is
a constant $q$ at each root, the number of quota trees starting at
any fixed root works out to be
$$\binom{nq}{q}^n\frac1{n(q(n-1)+1)}.$$
For $n=2$, the number of quota trees with quotas
$(i,j)$ and start portfolio at most $(1,0)$ is given by

$\displaystyle
 \left\{\begin{matrix}i+j+1&i+j\\ i&j\end{matrix}\right\}_{K_2^\circ}:$
\qquad
$\begin{array}{c|cccccc}
i\diagdown j & 0 & 1 & 2 & 3 & 4 & 5 \\
\hline
0 & 1 & 0 & 0 & 0 & 0 & 0 \\
1 & 1 & 1 & 1 & 1 & 1 & 1 \\
2 & 1 & 3 & 6 & 10 & 15 & 21 \\
3 & 1 & 6 & 20 & 50 & 105 & 196 \\
4 & 1 & 10 & 50 & 175 & 490 & 1176 \\
5 & 1 & 15 & 105 & 490 & 1764 & 5292 \\
\end{array}
$

\vspace{1ex}
Up to indexing, these are the Narayana numbers (\cite[A001263]{oeis},
\cite[ex.~6.36]{stanley2001enumerative});
they appear in numerous contexts (e.g.~Dyck paths counted by length and peak,
antichains in the poset $2*(k-1)*(n-k)$, the $h$-vector of the dual
simplicial complex to the associahedron $A_n$, etc.)

Notice that the diagonals in the preceding table add up to the Catalan
numbers; this is a special case of a very general fact.
Let $\pi:\tilde G\to G$ be a (not necessarily universal) out-covering,
$q$ and $\tilde q$ quotas on $G$ and $\tilde G$ respectively, and
$s$ and $\tilde s$ start portfolios such that
$s(v)=\sum_{\tilde v\in \pi^{-1}(v)}\tilde s(\tilde v).$
By the discussion in section \ref{sec:quotatrees}, given a $(G,q,s)$
quota forest $F$, once we lift the root of each tree to an arbitrary preimage
in $\tilde G$, this determines a unique lift of $F$. Thus,
counting quota trees in $\tilde G$ refines the counting
of quota forests in $G$ in the sense that
$$Q_=(G,q,s)=\sum_{\tilde q}Q_=(\tilde G,\tilde q,\tilde s),$$
where the sum ranges over all (achievable) quotas $\tilde q$ such that
$$\sum_{\tilde v\in \pi^{-1}(v)}\tilde q(\tilde v)=q(v)$$
for all $v\in V(G)$.

Returning to the current example, since $K_2^\circ$ has constant outdegree
2, one can construct an out-covering $K_2^\circ\to R_2$.  So the number
of quota trees in $K_2^\circ$ where the quota has $l_1$-norm $n$ is the
number of quota trees in $R_2$ with quota $n$, which we have already
seen is given by a Catalan number.

More generally, there are five essentially different ways to write
down a strongly connected rooted two-vertex graphs with outdegree 2.
In each case, the diagonal sums of the quota tree counts are Catalan
numbers, but the quotas reflect different interesting properties of
the binary trees.  All five cases appear as different entries in Sloane
\cite{oeis}; we list these as the first five rows of Table
\ref{sloanetable}, which collects a number of two-vertex graphs whose
quota tree counts have already been studied in other contexts.

\def\bbox{\path [use as bounding box] (-0.5,-0.5) rectangle (1.8, 0.5);}

\begin{table}
\label{sloanetable}
$$\begin{array}{c|l}
G&\textrm{Corresponding entry in Sloane \cite{oeis}}\\
\hline
\lower0.4cm\hbox{\begin{tikzpicture}
\bbox
\node [ddot,fill] (A) {};  \node [ddot, right of=A] (B) {};
\draw[->] (A) to [bend left] (B);
\draw[->] (A) to [loop left] ();
\draw[->] (B) to [loop right] ();
\draw[->] (B) to [bend left] (A);
\end{tikzpicture}}
 & \parbox[c]{3.8in}{A001263, Narayana numbers}
\\
\lower0.4cm\hbox{\begin{tikzpicture}
\bbox
\node [ddot,fill] (A) {};  \node [ddot, right of=A] (B) {};
\draw[->] (A) -- (B);
\draw[->] (A) to [bend left] (B);
\draw[->] (B) to [bend left] (A);
\draw[->] (B) to [loop right] ();
\end{tikzpicture}}
 & \parbox[c]{3.8in}{A127157, ordered trees with $n$ edges
      and $2k$ nodes of odd degree}
\\
\lower0.4cm\hbox{\begin{tikzpicture}
\bbox
\node [ddot,fill] (A) {};  \node [ddot, right of=A] (B) {};
\draw[->] (A) -- (B);
\draw[->] (A) to [loop left] ();
\draw[->] (B) to [in=345, out=15, loop] ();
\draw[->] (B) to [in=330, out=30, controls= +(30:1) and +(330:1)] (B);
\end{tikzpicture}}
 & \parbox[c]{3.8in}{A9766, the Catalan triangle again}
\\
\lower0.4cm\hbox{\begin{tikzpicture}
\bbox
\node [ddot,fill] (A) {};  \node [ddot, right of=A] (B) {};
\draw[->] (A) to [bend left] (B);
\draw[->] (A) to [loop left] ();
\draw[->] (B) -- (A);
\draw[->] (B) to [bend left] (A);
\end{tikzpicture}}
 & \parbox[c]{3.8in}{A108759, ordered trees with $n$ edges
      containing $k$ (non-root) nodes adjacent to a leaf}
\\
\lower0.4cm\hbox{\begin{tikzpicture}
\bbox
\node [ddot,fill] (A) {};  \node [ddot, right of=A] (B) {};
\draw[->] (A) to [bend left=45] (B);
\draw[->] (A) to [bend left=15] (B);
\draw[->] (B) to [bend left=45] (A);
\draw[->] (B) to [bend left=15] (A);
\end{tikzpicture}}
 & \parbox[c]{3.8in}{A212206, ``pat'' permutations
      of $[n]$ with $k$ descents}
\\
\lower0.4cm\hbox{\begin{tikzpicture}
\bbox
\node [ddot,fill] (A) {};  \node [ddot, right of=A] (B) {};
\draw[->] (A) to [bend left] (B);
\draw[->] (B) to [bend left] (A);
\draw[->] (B) to [loop right] ();
\end{tikzpicture}}
 & \parbox[c]{3.8in}{A055151, Motzkin polynomial coefficients; diagonal sums
 are Motzkin numbers A001006}
\\

\lower0.4cm\hbox{\begin{tikzpicture}
\bbox
\node [ddot,fill] (A) {};  \node [ddot, right of=A] (B) {};
\draw[->] (A) to [loop left] ();
\draw[->] (A) to [bend left] (B);
\draw[->] (A) -- (B);
\draw[->] (B) to [bend left] (A);
\draw[->] (B) to [in=345, out=15, loop] ();
\draw[->] (B) to [in=330, out=30, controls= +(30:1) and +(330:1)] (B);
\end{tikzpicture}}
 & \parbox[c]{3.8in}{A108767, 2-Dyck paths of order $n$ with $k$ peaks}
\\

\lower0.4cm\hbox{\begin{tikzpicture}
\bbox
\node [ddot,fill] (A) {};  \node [ddot, right of=A] (B) {};
\draw[->] (A) to [loop left] ();
\draw[->] (A) to [bend left=45] (B);
\draw[->] (A) to [bend left=15] (B);
\draw[->] (B) to [bend left=45] (A);
\draw[->] (B) to [bend left=15] (A);
\draw[->] (B) to [loop right] ();
\end{tikzpicture}}
 & \parbox[c]{3.8in}{A278881}
\\

\lower0.4cm\hbox{\begin{tikzpicture}
\bbox
\node [ddot,fill] (A) {};  \node [ddot,fill, right of=A] (B) {};
\draw[->] (A) to [bend left] (B);
\draw[->] (A) to [loop left] ();
\draw[->] (B) to [loop right] ();
\draw[->] (B) to [bend left] (A);
\end{tikzpicture}}
 & \parbox[c]{3.8in}{A145596, generalized Narayana numbers;
 A214457, rhombic tilings of an $(n,k,1,1,n,k,1,1)$ octagon}
\\

\lower0.4cm\hbox{\begin{tikzpicture}
\bbox
\node [ddot,fill] (A) {};  \node [ddot, right of=A] (B) {};
\draw[->] (A) to [loop left] ();
\draw[->] (A) to [bend left=22.5] (B);
\draw[->] (A) to [bend left=45] (B);
\draw[->] (A) -- (B);
\draw[->] (B) to [bend left] (A);
\draw[->] (B) to [in=345, out=15, loop] ();
\draw[->] (B) to [in=330, out=30, controls= +(30:1) and +(330:1)] (B);
\draw[->] (B) to [in=315, out=45, controls= +(45:1.5) and +(315:1.5)] (B);
\end{tikzpicture}}
 & \parbox[c]{3.8in}{A173020, 3-Runyon numbers}
\\

\lower0.4cm\hbox{\begin{tikzpicture}
\bbox
\node [ddot,fill] (A) {};  \node [ddot, right of=A] (B) {};
\draw[->] (A) -- (B);
\draw[->] (A) to [bend left] (B);
\draw[->] (B) to [bend left] (A);
\end{tikzpicture}}
& \parbox[c]{3.8in}{A068763, related to generalized Catalan sequences}
\\
\hline
\end{array}
$$
\caption{Some two-vertex graphs whose quota tree counts appear, possibly
re-indexed, in Sloane's Encyclopedia of Integer Sequences. In each case,
the start portfolio is one copy of each filled-in vertex.}
\end{table}

\subsubsection*{Example: quota forests over $K_n$ with symmetric roots}
It is even more symmetrical to count quota forests over $K_n$,
where we take both $q$ and $s$ to be constant over all vertices.
The quota tree count is
$$\binom{(n-1)q}{q-s}^n\frac{(nq-s)^{n-1}s}{(n-1)^{n-1}q^n}.$$
In particular, if $q=s$, the count is exactly one, reflecting the
fact that each tree in the forest is an isolated node.

\subsubsection*{Example: path graphs}
The path graph $P_n$ has only a single spanning tree from any root;
however, quota trees are much more interesting.  Intuitively, we have
$n$ parallel semitransparent panes of glass; at each one, a laser beam
can pass through, reflect, both, or neither. When we fire a beam
into one pane, the trajectory is then a tree immersing into
$P_n$, whose quotas count the number of times each pane is encountered.
If all quotas are $q$, and the beam is initially fired into one of the
outer panes, the number of quota trees works out to
$$\left(\frac12\binom{2q}{q}\right)^{n-2}=a_q^{n-2},$$
where $a_q=(1,3,10,35,126,\cdots)$ is sequence A001700 in Sloane.
When we fire the laser into any one of the internal panes, the
answer works out to $c_q a_q^{n-3}$, where $c_q=\binom{2q+1}q/(2q+1)$
is the $q$-th Catalan number.

\subsubsection*{Example: cycle graphs}
With the notation of the preceding example, the cycle graph $C_n$ has 
$$\binom{2q}{q}^n\frac{n}{2^{n-1}(q+1)}=\frac{2n\, a_q^n}{q+1}$$ quota
trees from any fixed root, when all vertex quotas are set to $q$.

\subsubsection*{Proof of Theorem \ref{countingforests}}
The strategy is to write down a functional equation jointly satisfied
by the generating functions for quota trees rooted at all vertices of
$G$, and solve it using the multivariate Lagrange inversion
formula.\footnote{Problem \textbf{3.3.42} in
  \cite{goulden2004combinatorial} is very similar; however, it counts
  trees rather than forests, and omits the immersion condition.}
Following \cite{goulden2004combinatorial}, let $R$ be a ring with
unity, $R[[\bl]]_1$ the set of formal power series in
$\bl=(\lambda_1,\ldots,\lambda_n)$ over $R$ with invertible constant
term, and $R((\bl))$ the ring of formal Laurent series over $R$.

\begin{theorem}[Multivariate Lagrange]%
\cite[Th.~1.2.9]{goulden2004combinatorial}
\ \\
Suppose $\bw=(w_1(\bt),\ldots,w_n(\bt))$ jointly satisfy the functional
equations $w_i(\bt)=t_i\phi_i(\bw)$, where $\bt=(t_1,\ldots,t_n)$.
Let $f(\bl)\in R((\bl))$ and
$\bp=(\phi_1(\bl),\ldots,\phi_n(\bl))$, where $\phi_i\in R[[\bl]]_1$.
Then
$$f(\bw(\bt))=\sum_\bq \bt^\bq[\bl^\bq]\left\{
  f(\bl)\bp^\bq(\bl)\left\Vert\delta_{ij}-\frac{\lambda_j}{\phi_i(\bl)}
  \frac{\partial\phi_i(\bl)}{\partial\lambda_j}\right\Vert\right\}.$$
\end{theorem}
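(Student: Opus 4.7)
This result is quoted from Goulden--Jackson, so the sketch below is offered for the reader's convenience; the author would simply cite it. The plan is to derive the identity from multidimensional formal residue calculus together with an invertible change of variables, then reconcile the form of the resulting determinant with the one in the statement.

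First I would handle well-definedness. Since each $\phi_i\in R[[\bl]]_1$ has invertible constant term, the formal implicit function theorem produces a unique solution $\bw(\bt)\in R[[\bt]]^n$ with $\bw(0)=0$ to the system $w_i = t_i\phi_i(\bw)$, so $f(\bw(\bt))\in R((\bt))$ is well-defined and it suffices to match coefficients of $\bt^\bq$ for each $\bq\in\Z^n$.

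Next I would express the coefficient as a formal residue, $[\bt^\bq]g(\bt)=\mathrm{Res}_\bt\,\bt^{-\bq-\mathbf{1}}g(\bt)\,d\bt$, and change variables via $t_i=\lambda_i/\phi_i(\bl)$. Each $\phi_i$ is a unit, so this is a formal automorphism with inverse $\bw(\bt)$, giving $\bw(\bt)=\bl$ after substitution. Factoring $\phi_i(\bl)^{-1}$ out of the $i$-th row of the Jacobian yields
\[ \det\Bigl(\tfrac{\partial t_i}{\partial\lambda_j}\Bigr) = \prod_i \phi_i(\bl)^{-1}\cdot \Bigl\|\delta_{ij} - \tfrac{\lambda_i}{\phi_i(\bl)}\tfrac{\partial\phi_i(\bl)}{\partial\lambda_j}\Bigr\|. \]
Since $t_i^{-q_i-1}=\lambda_i^{-q_i-1}\phi_i(\bl)^{q_i+1}$, the substitution transforms the integrand to $f(\bl)\,\bp^\bq(\bl)\,\|\delta_{ij}-\lambda_i\phi_i^{-1}\partial\phi_i/\partial\lambda_j\|\,\bl^{-\bq-\mathbf{1}}\,d\bl$, and extracting $[\bl^{-\mathbf{1}}]$ produces the right-hand side, except with $\lambda_i$ in place of $\lambda_j$ inside the determinant. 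The two versions agree by the identity $\det(I-\diag(\bl)\,C)=\det(I-C\,\diag(\bl))$ applied to $C_{ij}=\phi_i^{-1}\partial\phi_i/\partial\lambda_j$, which is a standard consequence of $\det(I-PQ)=\det(I-QP)$. Summing against $\bt^\bq$ recovers the stated formula.

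The main obstacle is making the multivariate residue substitution rigorous: the one-variable case is classical, but higher-dimensional residues under non-polynomial changes of variables require care with the Laurent series topology and with the order of iterated coefficient extraction. The cleanest route is to first establish the monomial case $f(\bl)=\bl^\bb$ by induction on $|\bb|$, reducing along individual coordinates to the one-variable Lagrange inversion formula; the general case then follows by $R$-linearity and passage to the $\bl$-adic completion. A fully combinatorial alternative is the species-theoretic argument of Joyal and Labelle, in which both sides enumerate weighted rooted trees on $[n]$ with each $i$-node decorated by $\phi_i$, and the determinant arises from a sign-reversing involution that cancels contributions from non-tree (cyclic) structures.
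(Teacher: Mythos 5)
The paper offers no proof of this statement: it is quoted verbatim from Goulden--Jackson (Theorem 1.2.9) and used as a black box in the proof of Theorem \ref{countingforests}, so there is no in-paper argument to compare against. Your sketch is a correct outline of the standard residue-theoretic proof. The key computations check out: the substitution $t_i=\lambda_i/\phi_i(\bl)$ is a formal automorphism because each $\phi_i$ is a unit in $R[[\bl]]_1$; the Jacobian factors as $\prod_i\phi_i^{-1}$ times $\bigl\Vert\delta_{ij}-\lambda_i\phi_i^{-1}\,\partial\phi_i/\partial\lambda_j\bigr\Vert$; and your reconciliation of the row-scaled determinant with the column-scaled one in the statement via $\det(I-PQ)=\det(I-QP)$ is exactly right (the two matrices are not equal, but their determinants are, which is all the formula requires). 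You are also right to flag the formal multivariate residue substitution as the step needing care --- that is Jacobi's residue/change-of-variables theorem for formal Laurent series, and it is precisely what Goulden--Jackson establish before deriving 1.2.9. One small caveat: for $f\in R((\bl))$ a genuine Laurent series, ``induction on $|\bb|$'' over monomials $\bl^\bb$ needs restating since $\bb$ may have negative entries, but linearity plus completeness still carries the monomial case to the general one. Your remark that a species-theoretic proof exists, with the determinant arising from a sign-reversing involution killing non-tree structures, is apt; that combinatorial reading is in fact the closest in spirit to how the paper applies the theorem, since the determinant there becomes the matrix-forest count of Theorem \ref{matrixforest}.
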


Given a graph $G$ with $n$ vertices, we will take $w_i(\bt)$ to be
the generating function
$$w_i(\bt)=\sum_T \bt^{\bq(T)}
  =\sum_T t_1^{q_1(T)}\cdots t_n^{q_n(T)},$$
where $T$ ranges over all quota trees rooted at vertex $i$, and $q_j(T)$
is the number of occurrences of vertex $j$ in $T$. The first observation
is that the $w_i$'s jointly satisfy the functional equation
\begin{equation}
\label{eq:functionalequation}
w_i(\bt) = t_i\prod_j (1+w_j(\bt))^{m_{ij}}
\end{equation}
where the product ranges over all directed edges $i\to j$ in $G$, since
by the immersion property, a quota tree with root $i$ can have at most
one copy of each of the $m_{ij}$ outgoing edges from $i$ to any vertex
$j$. Thus, in the Lagrange inversion theorem, we will take
\begin{equation}
\phi_i(\bl)=\prod_j (1+\lambda_j)^{m_{ij}}=\prod_j\beta_j^{m_{ij}}
\end{equation}
where $\beta_j$ represents the binomial $1+\lambda_j$.

It is immediate that
$$\frac{\lambda_j}{\phi_i(\bl)}
  \frac{\partial\phi_i(\bl)}{\partial\lambda_j}
  = \frac{m_{ij}\lambda_j}{\beta_j}.$$
We also have that
$$\bp^\bq(\bl)=\prod_i\prod_j\beta_j^{m_{ij}}=\bb^{\bq\cdot M}=\bb^\In$$
where $M$ is the adjacency matrix of $G$, and $\In=\bq\cdot M$ as
in Theorem \ref{countingforests}. Hence

$$ [\bt^\bq] f(\bw(\bt))= [\bl^\bq]\left\{
  f(\bl)\bb^\In
  \left\Vert\delta_{ij}-\frac{m_{ij}\lambda_j}{\beta_j}
  \right\Vert\right\}.$$

At this point we specialize $f$. If we were to set $f(\bl)=\lambda_i$,
we would extract precisely the generating function for quota trees
with root $i$.  Since the generating function for quota forests with
the sum of two portfolios is the product of the individual generating
functions, we obtain the generating function for portfolio $\bs$ by
taking
$$f(\bl)=\lambda_1^{s_1}\cdots\lambda_n^{s_n}=\bl^\bs.$$
Hence
\begin{eqnarray}
 [\bt^\bq] f(\bw(\bt)) &=& [\bl^\bq]\left\{
  \bl^\bs\bb^\In
  \left\Vert\delta_{ij}-\frac{m_{ij}\lambda_j}{\beta_j}
  \right\Vert\right\}\\
  &=& [\bl^{\bq-\bs}]\left\{\bb^\In
  \left\Vert\delta_{ij}-\frac{m_{ij}\lambda_j}{\beta_j}
  \right\Vert\right\}
\end{eqnarray}

We have $[\lambda_j^{k_j}]\beta_j^{n_j}=\binom{n_j}{k_j}$, so we can write
$$ [\bl^{\bq-\bs}]\{\bb^\In\} = \binom{\In}{\bq-\bs}$$
where the right-hand side represents the product of the
individual binomial coefficients. The determinant is, by Laplace
expansion, a sum of products
$$\prod_j l_j\left(\frac{\lambda_j}{\beta_j}\right)$$
where each $l_j$ is a linear function. Together with the observation
that
$$[\lambda_j^{k_j}]\left\{\frac{\lambda_j}{\beta_j}\beta_j^{n_j}\right\}
  = [\lambda_j^{k_j-1}]\beta_j^{n_j-1}=\binom{n_j-1}{k_j-1}
  = \frac{k_j}{n_j} \binom{n_j}{k_j},$$
we arrive at:
\begin{eqnarray}
[\bt^\bq]\{\bw(\bt)^\bs\}
\label{firstmatrixform}
  &=&\binom{\In}{\bq-\bs}
  \left\Vert\delta_{ij}-\frac{m_{ij}(q_j-s_j)}{\In_j}\right\Vert\\
  &=&\binom{\In}{\bq-\bs}\cdot\frac{1}{\In}
  \left\Vert\delta_{ij}\In_j-m_{ij}(q_j-s_j)\right\Vert
\end{eqnarray}
where $1/\In$ is the reciprocal of the product of the $\In_j$'s, yielding
Theorem \ref{countingforests}. (If any $\In_j=0$, the corresponding column
of the matrix in (\ref{firstmatrixform}) has a $1$ in position $j$ and
zeroes elsewhere, so there's no denominator there to clear, and we can
remove that row and column from the matrix.)\qed

\subsubsection*{Proof of Corollary \ref{countingatmostforests}}
We apply Theorem \ref{countingforests}
to an augmentation $\hat G$ of $G$. Given $G$, $q$ and $s$, form a
directed multigraph $\hat G$ by adjoining a new vertex $\ast$, with
$s_i$ edges from $\ast$ to node $i$. We take $\hat q(i)$ to be $q(i)$
if $i$ is a node of $G$, and $\hat q(\ast)=1$. Our start portfolio
$\hat s$ consists of the single vertex $\ast$. A quota tree which is
exact for $(\hat G,\hat q,\hat s)$ is equivalent to a quota forest in
$G$ which starts from each vertex $i$ of $G$ at most $s_i$ times.
Observing that $\hat\In_j=\In_j+s_j$ if $j\in G$, and $\In_\ast=0$,
Theorem \ref{countingforests} implies that the number of ``at most''
quota trees for $(G,q,s)$ is
$$
\binom{\hat\In}{\hat\bq-\hat\bs}
  \left\Vert\delta_{ij}-\frac{\hat m_{ij}(\hat q_j-\hat s_j)}{\hat\In_j}\right\Vert
=
\binom{\In+\bs}{\bq}
  \left\Vert\delta_{ij}-\frac{\hat m_{ij}(\hat q_j-\hat s_j)}{\hat\In_j}\right\Vert
$$
Since $m_{i\ast}=0$ for $i\ne\ast$, the determinant is just the
$(\ast,\ast)$ minor (i.e.~the principal minor corresponding to $G$.)
Removing hats yields the result.
\qed

\subsubsection*{Proof of the matrix-forest theorem}
To deduce the enough arrows theorem (Theorem \ref{thm:maintheorem})
from Theorem \ref{countingforests}, we need to know that the determinant
is nonzero precisely when $(G,q,s)$ is connected. We do this by
interpreting the determinant as a count of spanning forests. The
result needed is equivalent to the usual matrix-tree theorem, but the
formulation we need is not the standard one, so we include a proof
here.

For general directed graphs, a version of the matrix-tree theorem
originally due to Tutte \cite[Thm.~5.6.4]{stanley2001enumerative}
expresses the number of directed spanning trees in a graph as the
determinant of a principal minor of (essentially) the Laplacian of the
graph. The following version of the
matrix-tree theorem, which enumerates rooted spanning trees by weight,
suffices for our purposes.

Let $G$ be a directed graph on $n$ vertices $\{v_1,\ldots,v_n\}$. An
inward spanning tree rooted at $r$ is a directed subgraph of $G$ without
cycles such that every vertex has exactly one outgoing edge, except
the root $r$ which has no outgoing edges. The weight of an edge
$v_i\to v_j$ is $a_{ij}$, the weight of a graph
is the product of its edge weights, and the weight of a set of graphs
is the sum of the weights of the graphs in the set.

\begin{theorem}[Matrix-tree theorem for directed graphs]
\cite[\S4]{zeilberger1985combinatorial}
The weight of the set of inward spanning trees of $G$ rooted at $r$ is
the determinant of the matrix obtained by deleting the $r$-th row and
$r$-th column from the matrix 
$$\begin{pmatrix}
a_{12}+\cdots+a_{1n} & -a_{12} & \cdots & -a_{1n}\\
-a_{21} & a_{21}+a_{23}+\cdots+a_{2n} & \cdots & -a_{2n}\\
\vdots &\vdots & \ddots & \vdots\\
-a_{n1} & -a_{n2} & \cdots & a_{n1}+a_{n2}+\cdots+a_{2,n-1}\\
\end{pmatrix}.$$
\end{theorem}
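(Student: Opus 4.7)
The plan is to adapt the standard combinatorial (Zeilberger-style) proof via a sign-reversing argument on the expanded determinant. Write $V' = V \setminus \{r\}$ and let $L'$ denote the deleted matrix, so
$$\det L' = \sum_{\sigma \in S_{V'}} \mathrm{sgn}(\sigma) \prod_{i \in V'} L'_{i,\sigma(i)}.$$
The first step is to expand each diagonal entry $L'_{ii} = \sum_{j \ne i} a_{ij}$ by multilinearity, distributing the sum over outgoing edges of $v_i$. Each resulting term is then indexed by a pair $(g,\sigma)$, where $g \colon V' \to V$ is a ``functional subgraph'' with $g(i) \ne i$ (every vertex of $V'$ has a designated outgoing edge), and $\sigma$ is a permutation of $V'$ whose fixed-point set is exactly the set of vertices contributing via the diagonal. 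Off-diagonal contributions force $g|_U = \sigma|_U$ on the non-fixed-point set $U$ of $\sigma$, and a short check shows $U$ must be a union of cycles of $g$ lying entirely inside $V'$ (all of length $\ge 2$). The weight of the term is $\prod_{i \in V'} a_{i,g(i)}$ times the sign $\mathrm{sgn}(\sigma)\cdot(-1)^{|U|}$, since each off-diagonal entry carried a factor $-1$.

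Next I would compute the combined sign. Writing $U$ as a disjoint union of $k$ cycles with $|U|$ vertices total, $\mathrm{sgn}(\sigma|_U) = (-1)^{|U|-k}$; the factor $(-1)^{|U|}$ from the off-diagonals then cancels all but $(-1)^{k}$. Now reorder the sum: fix $g$ and let $\mathcal{C}(g)$ be the set of cycles of $g$ that live in $V'$. Every valid $U$ is a union of some subset $S \subseteq \mathcal{C}(g)$, and conversely every such $S$ gives a valid $(g,\sigma)$. The inner sum collapses to
$$\sum_{S \subseteq \mathcal{C}(g)} (-1)^{|S|} = \prod_{C \in \mathcal{C}(g)}(1 + (-1)) = [\mathcal{C}(g) = \varnothing].$$

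Finally, $\mathcal{C}(g) = \varnothing$ means every $g$-trajectory in $V'$ eventually reaches $r$, which is exactly the condition that the edges $\{i \to g(i) : i \in V'\}$ form an inward spanning tree rooted at $r$; its weight is precisely $\prod_{i \in V'} a_{i,g(i)}$. Summing over surviving $g$ gives the claimed identity. The main obstacle is the sign bookkeeping, i.e., setting up the correspondence between terms in the multilinearly expanded Leibniz sum and pairs $(g,U)$ so that the contribution decouples as (weight of $g$) $\times$ (signed sum over $U$); once this decoupling is in place, the cancellation $\prod_C (1 + (-1)) = 0$ for any nontrivial $\mathcal{C}(g)$ finishes the argument immediately.
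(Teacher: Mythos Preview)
Your proof is correct; this is essentially Zeilberger's sign-reversing argument, which is exactly what the paper cites for this result. Note that the paper does not give its own proof of this theorem: it is quoted as a known result from \cite[\S4]{zeilberger1985combinatorial} and then used as input to derive the matrix-forest theorem (Theorem~\ref{matrixforest}), so there is no independent argument in the paper to compare against beyond the reference to the same source whose method you have reproduced.
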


If we only want to count trees, we can set $a_{ij}$ to be the number
of edges from $v_i$ to $v_j$.  Cayley's formula follows by setting
$a_{ij}=1$ for all $i\ne j$.

To prove Theorem \ref{matrixforest} from the matrix-tree theorem,
first transpose the adjacency matrix since we are interested in
outward trees rather than inward trees.  Construct $\hat G$ as in the
proof of the at-most quota tree formula: $\hat G$ has an additional
vertex $\ast$, and an edge from $\ast$ to each node $i$ in $G$ with
weight $s_i$. A spanning tree in $\hat G$ rooted at $\ast$ corresponds
to a spanning forest $F$ in $G$, where vertex $i$ is a root of a tree
in $F$ iff the spanning tree in $\hat G$ uses the edge $\ast\to
i$. Now apply the matrix-tree theorem to $\hat G$.\qed

(We remark that it is easy to deduce the matrix-tree theorem from the
matrix-forest theorem by setting $s_i$ to be $1$ if $i$ is the desired
root, and $0$ otherwise. So the formulations are equivalent.)

\section{Uniformly generating quota forests}
\label{sec:generation}

The Colbourn-Myrvold-Neufeld algorithm \cite{colbourn1996algorithms}
is an excellent example of one general method of uniformly sampling
from the set of spanning trees (or arborescences, in the case of
directed graphs). One builds up a tree $T$ by
considering each edge $e$ of $G$ in turn, and flipping a biased coin
to decide whether to attach it to $T$.  At
each step of a graph search, the matrix-tree theorem allows us to
efficiently count the number of ways to extend the partial tree $T$ to
a spanning tree, using only edges we have not yet considered, and the
number of these extensions which use the edge $e$. This allows us to
set the coin bias at each step so as to guarantee
uniformly distributed outputs. The determinant of the Laplacian can be
updated easily at each step, as either collapsing or deleting an edge
of $G$ results in a rank-one update to the Laplacian.

We extend this idea to quota trees by running a version of quota
search.  At each stage of this algorithm we have two forests
$\FU\subset\FS$ which immerse into $G$. $\FU$ is the search forest
constructed so far (the roots together with edges we ``used''), while
$\FS$ is the set of all edges we have dequeued from the priority queue
(which contains, in addition to the edges we ``used,'' some other
edges that we ``skipped.'')  Initially $\FU=\FS$ consists simply of the
roots determined by the start portfolio. We will need to know the
number of extensions of $\FU$ to a forest
achieving the given quotas, while avoiding using any skipped edges
(i.e.~those in $\FS\setminus\FU$.)  To this end, we prove the following
extension of theorem \ref{countingforests}.

\begin{theorem}
\label{countingextensions}
With notation as above, let $\seen(v)$ and $\used(v)$
be the number of edges ending at $v$ in $\FS$ and $\FU$
respectively. Then the number of quota forests in $G$ achieving
quotas $\bq$, extending $\FU$ and containing no edges from
$\FS\setminus\FU$, is
$$\qsymbol{\In - \seen}{\bq - \bs - \used}{G}.$$
\end{theorem}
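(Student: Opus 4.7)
The plan is to prove Theorem \ref{countingextensions} by induction on the state $(\FU,\FS)$, simultaneously giving the combinatorial proof promised for Theorem \ref{qsymbolrecurrence}. Denote the left-hand count by $N(\FU,\FS)$, and set $\ba = \In - \seen$ and $\bb = \bq - \bs - \used$. The induction is downward by inclusion on $(\FU,\FS)$; each inductive step adds one edge to $\FS$, so the induction terminates.

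For the base cases, the recurrence cannot be applied when $\ba = \bb M$, which combinatorially corresponds to the quota-search queue being empty. If moreover $\bb = \mathbf 0$, then $\FU$ is already a complete $(G,\bq,\bs)$-forest, its unique extension is itself, and $N = 1$; the right-hand side also equals $\qsymbol{\ba}{\mathbf 0}{G} = \det\diag(\ba)\cdot\prod_v(1/a_v) = 1$ (with the convention that $a_v = 0$ coordinates are dropped). If $\bb \ne \mathbf 0$ with the queue empty, then $\FU$ cannot be extended to meet the quota, so $N = 0$, matching the first clause of Theorem \ref{qsymbolrecurrence}.

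For the inductive step, at a non-terminal state pick any candidate edge $\tilde e$ currently in the queue and let $v = f(t(\tilde e))$. Partitioning the extensions of $\FU$ according to whether they contain $\tilde e$ gives
\begin{equation*}
N(\FU,\FS) = N(\FU\cup\{\tilde e\},\FS\cup\{\tilde e\}) + N(\FU,\FS\cup\{\tilde e\}).
\end{equation*}
Both successor states decrement $\ba$ by $\delta_v$; only the first also decrements $\bb$ by $\delta_v$. By the inductive hypothesis, the proof reduces to the algebraic identity
\begin{equation*}
\qsymbol{\ba}{\bb}{G} = \qsymbol{\ba-\delta_v}{\bb-\delta_v}{G} + \qsymbol{\ba-\delta_v}{\bb}{G}.
\end{equation*}
This is Theorem \ref{qsymbolrecurrence}, which I would establish by multilinearity: writing the $v$-th column of $M(\ba,\bb) = \diag(\ba) - M\diag(\bb)$ as $a_v e_v - b_v M_{\cdot,v}$ expresses each of the three relevant determinants as the same linear combination of the $(v,v)$-minor $\tilde D$ and the auxiliary determinant $\hat D$ obtained by replacing the $v$-th column with $M_{\cdot,v}$. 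Combining with Pascal's rule for $\binom{a_v}{b_v}$ and the absorption identities $a_v\binom{a_v-1}{b_v-1} = b_v\binom{a_v}{b_v}$ and $a_v\binom{a_v-1}{b_v} = (a_v-b_v)\binom{a_v}{b_v}$, the identity collapses to the tautology $D_1 = a_v\tilde D - b_v\hat D$.

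The main obstacle is not any single computation but the correspondence between the combinatorial and algebraic sides. One must check that a candidate edge ending at $v$ exists precisely when $a_v > (\bb M)_v$ (so that the hypothesis of the algebraic recurrence agrees with non-terminality on the combinatorial side), handle the degenerate cases $a_v = 0$ via the deletion convention in (\ref{qsymbol}), and confirm that $N(\FU,\FS)$ depends only on the vertex-level counts $\seen$ and $\used$ rather than on finer topological data in the universal cover --- a fact which is not immediate from the statement but which falls out of the induction once the recurrence is established.
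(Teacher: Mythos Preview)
Your approach is correct and genuinely different from the paper's. The paper does not argue by induction at all: it constructs an auxiliary graph $\hat G$ consisting of the partial forest $\FU$ glued to a fresh copy of $G$, with one edge from $\FU$ to $G$ for each entry currently in the queue, and sets up $\hat q,\hat s$ so that $(\hat G,\hat q,\hat s)$-forests biject with the desired extensions. The block-triangular shape of the adjacency matrix of $\hat G$ then lets the paper read off the claimed quota symbol directly from a single application of Theorem~\ref{countingforests}.

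Your route runs the logic in the opposite direction. You prove the algebraic recurrence of Theorem~\ref{qsymbolrecurrence} by hand (multilinearity in the $v$-th column plus the two absorption identities --- this checks out), and then use it as the inductive engine. The payoff is substantial: since the base cases are verified directly and the initial state $\FU=\FS=$ roots has $\seen=\used=0$, your argument yields Theorem~\ref{countingforests} itself as a corollary, with no appeal to Lagrange inversion. The paper, by contrast, needs Lagrange inversion to get Theorem~\ref{countingforests} first, and only afterwards remarks that Theorem~\ref{qsymbolrecurrence} is ``implicit'' in Theorem~\ref{countingextensions}. What the paper's argument buys is a structural explanation: the fact that $N(\FU,\FS)$ depends only on the vectors $\seen$ and $\used$ is immediate from the construction of $\hat G$, whereas in your argument it is, as you note, an a posteriori consequence of the formula.

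Two small points worth tightening. First, the base case you invoke when the queue is empty but $\bb\ne\mathbf 0$ is the \emph{third} clause of Theorem~\ref{qsymbolrecurrence} (the $\ba=\bb M$ clause), not the first; the vanishing of the symbol there follows because $\bb$ lies in the left kernel of $M(\ba,\bb)$. Second, your termination argument should be stated as induction on $\lVert\ba\rVert_1=\lVert\In-\seen\rVert_1$, which strictly decreases at every step; the phrasing ``adds one edge to $\FS$'' is correct but obscures why the process is bounded, since using an edge also enlarges the queue.
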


Before proving this theorem, we outline the resulting algorithm for
random quota tree generation.  In each iteration of the main loop in
quota search, we first dequeue an edge, and then either add the edge
to the forest or not.  By
Theorem \ref{countingextensions}, we want to add the edge with
probability
$$\qsymbol{\In - \seen - \delta_v}{\bq - \bs - \used - \delta_v}{G}
  \Big/\qsymbol{\In - \seen}{\bq - \bs - \used}{G},$$
where $\delta_v$ has a $1$ in the $v$ position and $0$ elsewhere.
We then replace $\seen$ by $\seen+\delta_v$, and if we used the edge
we replace $\used$ by $\used+\delta_v$. Either way, updating the
quota symbol requires updating a single binomial coefficient and
a single column in the matrix in (\ref{qsymbol}). As this is a
rank-one update, the determinant or inverse can be updated in
$O(n^2)$ time, where $n=|V(G)|$ (see \cite{golub2012matrix}).
As a result, we can generate a uniformly sampled quota forest in
$$O\left(V^2 \sum_v q(v) Adj(v) (\lg V+\lg q(v))\right)$$
multiprecision operations.

\subsubsection*{Proof of Theorem \ref{countingextensions}}
As was the case for Corollary \ref{countingatmostforests}, we can
proceed by applying Theorem \ref{countingforests} to an
appropriate graph $\hat G$. In this case, $\hat G$ completely
encodes the state of a run of quota search. At any time, $\hat G$
consists of $\FU\cup G$, together with one additional edge
from $\FU$ to $G$ for each edge in the priority queue $Q$.
We will define $\hat q$ and $\hat s$
so that $(\hat G,\hat q,\hat s)$-forests correspond
to $(G,q,s)$-forests which extend $\FU$ and contain no edges from
$\FS\setminus\FU$.

Initially, $\FU$ consists of the roots specified by the start
portfolio. For each $v\in\FU$, and each edge $e\in\outst{f(v)}$,
insert an edge $\hat e$ from $v$ to $t(f(v))$ (which lies in $G$).
Set $\hat s(v)=1$ for each of the roots, and
$\hat s(v)=0$ on $G$. Set $\hat q(v)=1$ for $v\in\FU$, and
$\hat q(v)=q(v)-s(v)$ for $v\in G$.  Clearly $(G,q,s)$-forests
$G$ correspond to $(\hat G,\hat q,\hat s)$-forests; we have just
lifted the start portfolio out of $G$. The start portfolio $\hat s$
is unchanged from this point forward.

We will arrange that, at every step, $(\hat G,\hat q,\hat s)$-forests
correspond precisely to $(G,q,s)$-forests which extend $\FU$ and
which use no edges in $\FS\setminus\FU$.
When we dequeue an edge $e$ in quota search, that edge
corresponds to an edge $v\to w$ in $\hat G$ from $\FU$ to $G$.
Remove the edge from $\hat G$. If quota search decides to add the
edge to the search forest $\FU$, we keep track of this in $\hat G$
by adding an edge from $v$ to a new vertex $\hat w\in\FU$,
extending the immersion $f:\FU\to G$ by mapping $f(\hat w)=w$.
Additionally, quota search inserts some of the edges $e'\in\outst{w}$
into the queue $Q$; for
each such $e'$, we add to $\hat G$ a corresponding edge $\hat{e'}$
from $\hat w$ to $t(e')\in G$.
We assign $\hat q(\hat w)=1$, and decrement $\hat q(w)$.

It remains to evaluate the quota symbol
$$\qsymbol{\hat\In}{\hat\bq-\hat\bs}{G}.$$
If $v$ is a root of a tree in $\FU$ then
$\hat\In(v)=\hat q(v)-\hat s(v)=0$; for other vertices $v\in\FU$
we have $\hat\In(v)=\hat q(v)-s(v)=1$, so the binomial coefficients
from $\FU$ are all $1$. As there are no edges from $G$ to $\FU$ in
$\hat G$, the adjacency matrix of $\hat G$ has a block structure
so the determinant in the quota symbol is the product of the
determinants coming from $\FU$ and $G$ independently. But the former
determinant is one, as it counts the single quota tree in $\FU$.

Thus $\FU$ contributes nothing to the symbol; we are left with the
contribution from $G$.
But for $v\in G$, we have $\hat\In(v)=\In(v)-\seen(v)$,
$\hat q(v)=q(v)-s(v)-\used(v)$, and $\hat s(v)=0$, and the result
now follows from Theorem \ref{countingforests}.\qed

\section{Minimum-Weight Quota Forests}
\label{sec:mqf}

In this section we consider the problem of finding a minimum-weight
quota forest (MQF). Assume we are given an achievable triple $(G,q,s)$
and a weight function $w:E(G)\to\R$. We assume without loss that
$q(v)>0$ for all $v$.  If $f:T\to G$ is an immersion, the pullback
$(f^*w)(e)=w(f(e))$ defines natural edge weights on $T$.  We define
the weight of $f:T\to\R$ to be
$$(f^*w)(T)=\sum_{e\in E(T)}(f^*w)(e).$$ When the immersion $f$ is
understood, we may abuse notation by writing $w$ for $f^*w$.

The minimum-weight quota forest problem is then simply: given an
achievable triple $(G,q,s)$ and weight function $w$, find a quota
forest $T$ which minimizes $w(T)$. (As usual, since edges are
directed, ``tree'' here really means means ``arborescence,'' and
``forest'' really means ``branching.'')  When the quotas are all $1$,
and $||s||_1=1$, this reduces to the minimum spanning arborescence
(MSA) problem. We begin by reviewing Edmonds' MSA algorithm
\cite{edmonds1967optimum}, as it forms the basis of our algorithm for
minimum-weight quota trees.

\subsection*{Edmonds' algorithm}

Edmonds' algorithm (following
\cite[\S3.4]{gondran1984graphs} and \cite{edmonds1967optimum})
takes as input a triple $(G,r,w)$, where $G$ is a
directed loop-free multigraph, $r$ is a specified root, and $w$ is
a weight function on edges. It proceeds as follows:
\begin{enumerate}
\item Form the subgraph $H\subset G$ by taking, for each vertex $v$
  of $G$ other than $r$, the lowest-weight edge into $v$.

\item If $H$ has no circuits, it is a minimum spanning arborescence;
  return $H$.

\item Otherwise, $H$ contains a circuit $C$. Collapse $C$ to form
  $G'=G/C$, and reweight the surviving edges: set
  $w'(e)=w(e)$ unless $e$ is an edge from $G\setminus C$ to $C$, in
  which case 
  set $w'(e)=w(e) - w(e')$, where $e'$ is the edge in $C$ such that
  $t(e)=t(e')$.

\item Recurse to get a MSA $T'$ for $(G',r, w')$.
  $T=T'\cup C\setminus\{e'\}$ is a MSA for $(G,r,w)$, and $w(T)=w(T')+w(C)$.
\end{enumerate}

Edmonds formulated the MSA problem as a linear program.
Define an \df{inventory} to be a set of values $\{x_e\mid e\in G\}$;
we identify any subgraph $H\subset G$ with its inventory
$x_e = [e\in H]$.  The inventory $x=\{x_e\}$ of a spanning arborescence
satisfies the following linear constraints:
\begin{description}
\item[edge] For each edge $e\in E(G)$, $0\le x_e\le 1$;
\item[node] For each node $v\in V(G)$, $\sum_{e\in\inst v}x_e=1$ if
  $v\ne r$, and $0$ if $v=r$;
\item[subset] For each subset $S\subset V(G)$ with $|S|\ge2$,
$$\sum_{e\in E(G[S])}x_e\le |S|-1,$$
where $G[S]$ is the subgraph of $G$ induced by $S$.
\end{description}

Edmonds shows \cite[Theorem 2]{edmonds1967optimum}
that the vertices of the polyhedron defined by these
constraints correspond precisely to arborescences, so in particular
satisfy $x_e\in\{0,1\}$. Thus an MSA corresponds to a vertex
minimizing the weight $\sum_e w(e)x_e$ of the tree.

We remark that step 1 of Edmonds' algorithm ensures that the
\textbf{edge} and \textbf{node} conditions always hold.
The circuit $C$ in step 3 exists precisely when the \textbf{subset}
condition is violated.

\subsection{Extension to minimum-weight quota forests}
We associate to an immersed tree $f:T\to G$ the inventory $x$ given by
$x_e=|f^{-1}(e)|$, so that $w(T)=\sum_e w(e)x_e$. Typically $x_e>1$,
so $T$ cannot be reconstructed uniquely from its edge
inventory. However, if the $x_e$'s are nonnegative integers, it is
useful to associate to $x$ the multigraph having $x_e$ copies of
each edge in $G$; we denote this multigraph by $G[x]$.

Given quotas and a start portfolio, one can write down an analogue of
the linear constraints such that an integer point is feasible if and
only if it is the inventory of a quota forest.  We will develop an
extension of Edmonds' algorithm which produces a minimal-weight
inventory. As in the MSA case, the algorithm guarantees that the
resulting inventory is integral, so it corresponds to at least one
minimum-weight quota forest.  A separate algorithm can be used to
build a forest with that inventory, if the forest itself is needed.

Most of the work is in verifying that the integer solutions to the
linear constraints correspond precisely to inventories of quota forests.

\begin{theorem}[LP characterization of quota forest inventories]
\label{inventoryconstraints}
Let $(G,q,s)$ be given, and let $\{x_e\}$ be the edge inventory of any
quota forest in $G$ achieving quotas $q$ with start portfolio
$s$. Then the following constraints hold:

\begin{description}
\item[edge] For each edge $e\in E(G)$, $0\le x_e\le q(i(e))$;
\item[node] For each node $v\in V(G)$, $\sum_{e\in\inst v}x_e=q(v)-s(v)$;
\item[subset] For each subset $S\subset V(G)$ with $|S|\ge1$,
$$\sum_{e\in E(G[S])}x_e\le \sum_{v\in S}q(v)-1,$$
where $G[S]$ is the subgraph of $G$ induced by $S$.
\end{description}

Conversely, given an achievable triple $(G,q,s)$, and integers
$\{x_e\}$ which satisfy these constraints, there exists a $(G,q,s)$
quota forest with inventory $\{x_e\}$.
\end{theorem}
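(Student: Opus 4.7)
The forward direction reduces to three standard observations about any immersed quota forest $f:F\to G$ with inventory $x_e = |f^{-1}(e)|$. The edge constraint $x_e\le q(i(e))$ is precisely the immersion property applied to the $q(i(e))$ preimages of $i(e)$. The node constraint restates that each non-root copy of $v$ in $F$ has exactly one incoming edge, giving $\sum_{e\in\inst v}x_e=q(v)-s(v)$. For the subset constraint, the sub-forest of $F$ spanned by the $\sum_{v\in S}q(v)$ copies of vertices in $S$ has at least one connected component (since $q$ is positive on $S$), hence at most $\sum_{v\in S}q(v)-1$ edges.

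For the converse I would build the forest algorithmically, by running a capacity-restricted variant of generic quota search. Maintain residuals $r_e$ initialized to $x_e$, and amend \textsc{UseEdge} so that $e'\in\outst v$ is enqueued only when $r_{e'}>0$; whenever an edge with image $e'$ is used, decrement $r_{e'}$. By construction the output forest is an immersion into $G$ (since each distinct $G$-edge is enqueued at most once per lifted vertex) and has inventory pointwise bounded by $x$; if the algorithm drives $q$ to zero then the node condition forces the used inventory to equal $x$ exactly.

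The success of the modified algorithm would follow from maintaining the invariant that the residual data $(q,s_{\mathrm{queue}},r)$ continues to satisfy a residual version of all three LP constraints throughout the main loop. The initial conditions come from the hypothesis; the edge and node conditions are preserved by essentially the same token-counting argument as in the proof of Theorem \ref{thm:maintheorem}; and connectivity together with the node condition at termination (when $Q$ is empty) force $q\equiv 0$ and $r\equiv 0$, proving success.

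The main obstacle will be verifying preservation of the subset constraint across each iteration, particularly for subsets on which the inequality is tight: the decrement of $q(t(\tilde e))$ by one must be matched by a drop of at least one in the left-hand side for every tight $S$ containing $t(\tilde e)$. This requires a case analysis paralleling the treatment of tight subsets in Edmonds' arborescence algorithm, exploiting the fact that a tight $S$ admits exactly one ``external input'' (either a portfolio vertex in $S$ or an edge crossing into $S$). If the invariant approach proves too delicate, an equivalent structural induction on $\sum_e x_e$ can be used instead: show that any inventory satisfying the LP constraints admits an edge $e^*$ with $x_{e^*}>0$ whose removal (together with decrementing $q(t(e^*))$ and possibly $s$) yields a smaller inventory still satisfying all constraints; the existence of such $e^*$ again reduces to the structure of tight subsets.
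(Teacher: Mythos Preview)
Your forward direction is correct, and your argument for \textbf{subset} is genuinely different from the paper's. You observe directly that the induced sub-forest of $F$ on $f^{-1}(S)$ is a forest on $\sum_{v\in S}q(v)\ge 1$ vertices and hence has at most that many minus one edges; since those edges are exactly the preimages of $E(G[S])$, the bound follows in one line. The paper instead argues by contradiction: if \textbf{subset} fails, combining with \textbf{node} forces $s\equiv 0$ on $S$ and no edges entering $S$ from outside, whence every copy of every $v\in S$ has its parent already in $f^{-1}(S)$, making $f^{-1}(S)$ unreachable from the roots. Along the way the paper invokes Euler's theorem to exhibit an immersed Eulerian cycle on $G[S][x]$; that cycle is not needed for the contradiction itself but is recorded for reuse in the collapsing step of the MQF algorithm. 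Your argument is cleaner for the theorem; the paper's buys an algorithmic byproduct.

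Your reverse direction is essentially the paper's approach. The paper phrases it as an explicit induction on $\|q\|_1$: pick any $r$ with $s(r)>0$, decrement $q(r)$ and $s(r)$, and for each $e\in\outst r$ with $x_e>0$ decrement $x_e$ while incrementing $s(t(e))$; then verify the three constraints persist and recurse. This is your capacity-restricted quota search unwound, with the evolving $s$ playing the role of your queue contents. The paper, like you, treats the \textbf{subset}-preservation step as the nontrivial one (though it dismisses it as ``straightforward''); your identification of the key fact---that a tight $S$ has exactly one external input---is exactly what makes it go through: if $r\in S$ and $S$ is tight, then either some edge from $r$ into $S$ has positive inventory, or $S\setminus\{r\}$ already violates \textbf{subset}.

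One bookkeeping wrinkle in your algorithmic formulation: if you decrement $r_{e'}$ only when the lifted edge is \emph{used} rather than when it is \emph{enqueued}, several lifts of $i(e')$ can each see $r_{e'}>0$ before any of them is dequeued, so the queue may hold more copies of $e'$ than the residual permits. Decrementing on enqueue, or re-checking $r_{e'}>0$ on dequeue, fixes this; it does not affect the soundness of the invariant argument.
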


Note that we now need to check the \textbf{subset} condition on singletons,
since loops can be traversed in MQFs (but do not affect MSAs).

\begin{proof}
The forward direction is similar to the arborescence case. Suppose
$\{x_e\}$ is the inventory of a quota forest. Then \textbf{edge}
follows from the immersion condition, as the number of preimages of $e$
can be at most the number of preimages of the initial vertex $i(e)$ of
$e$. \textbf{node} asserts precisely that the vertex quotas are met.
Finally, by \textbf{node}, we have
$$\sum_{v\in S}\sum_{e\in\inst v}x_e \le \sum_{v\in S}q(v)-s(v).$$
So if \textbf{subset} fails, we must have the equality
$$\sum_{e\in E(G[S])}x_e=\sum_{v\in S}\sum_{e\in\inst v}x_e =
  \sum_{v\in S}q(v)-s(v)=\sum_{v\in S} q(v).$$

In particular, $S$ does not involve any vertices in the start portfolio,
and in the multigraph $G[S][x]$,
both the indegree and outdegree of each vertex $v$ are equal to $q(v)$.
Thus, by the multigraph version of Euler's theorem
(Theorem \ref{eulertheorem}), $G[S][x]$ admits a directed Euler circuit.
That is, there is a map of a directed cycle $C$ onto $G[S]$
hitting each vertex $v$ exactly $q(v)$ times and covering each edge
exactly $x_e$ times. (For later reference, we remark that since $C$ is a
cycle, this map is automatically an immersion.) This means the quotas
for $v\in S$ are entirely used up by edges within $S$, which does not
contain any start vertices. Thus the alleged quota forest contains
no paths from start vertices to $S$, a contradiction. So, the inventory
of a quota forest must satisfy \textbf{subset} as well.

For the reverse direction, we are given an achievable $(G,q,s)$ and an
edge inventory
$\{x_e\}$ satisfying the \textbf{edge}, \textbf{node} and \textbf{subset}
conditions hold; we wish to construct a quota forest with the specified
edge inventory.  If $\mathop{\mathrm{supp}} q=\emptyset$ we are done;
the only feasible inventory is the all-zero inventory,
which is realized by the empty forest.  Thus we may assume at least one
node has positive quota. By achievability, some node $r$
must have $s(r)>0$. We construct $(G, \hat q, \hat s)$ and inventory
$\{\hat x_e\}$ by ``using'' as many edges out of $r$ as possible. Precisely:
\begin{enumerate}
\item Initialize $\hat q=q$, $\hat s=s$, $\hat x_e=x_e$.
\item ``Use'' the root: decrement $\hat q(r)$ and $\hat s(r)$.
\item ``Use'' the edges out of $r$, and allow starts from their
  endpoints. That is, for each edge $e\in\outst r$ with $x_e>0$,
  decrement $\hat x_e$ and increment $\hat s(t(e))$.
\end{enumerate}
It is straightforward to check that $(G,\hat q,\hat s)$ and $\{\hat x_e\}$
satisfy the \textbf{edge}, \textbf{node} and \textbf{subset} conditions,
and $||\hat q||_1=||q||_1-1$, so we are done by induction on the sum of the quotas.
\end{proof}

We remark that both directions of this proof are well-suited for
effective implementation. The subset $S$ in the forward direction is a
start-free source in the strongly connected component graph of the
multigraph $G[x]$, and can thus be found in
$O(V(G)+E(G))$ time. The reverse direction says that we can use a
greedy algorithm to construct a quota forest from a feasible edge
inventory; this takes $O(||q||_1)$ time as stated.

The collapsing step will introduce one further wrinkle.  In analogy
with Edmonds' algorithm, when we find a subset $S$ violating
\textbf{subset}, we will collapse it into a single vertex $v_S$. We
assign $v_S$ a quota of $1$, since WLOG a MQF $T$ will enter $S$
exactly once, and then follow the circuit $C$. However, if $v\in S$
has $q(v)>1$, then for a given edge $e$ from $v$ to $V(G)\setminus S$,
a quota tree $T$ may contain up to $q(v)$ lifts of $e$. To handle this
correctly, we replace the single edge $e$ out of $v$ in $G$ with
$q(v)$ copies of $e$ coming out of $v_S$, all with weight
$w(e)$.

Specifically, we keep track of a ``copy count'' $c_e$ for each
edge $e$ of $G$, typically initialized to $1$. When we collapse $S$,
we multiply $c_e$ by $q(i(e))$ for each edge $e$ connecting $S$ to
$V(G)\setminus S$. In this context,
the \textbf{edge} constraint becomes:
\begin{description}
\item[edge] For each edge $e\in E(G)$, $0\le x_e\le c_e q(i(e))$;
\end{description}

We capture the resulting algorithm for computing a
minimum-weight quota forest as Algorithm \ref{alg:MQF}.
The proof of correctness proceeds essentially identically to that of
Edmonds' algorithm, given Theorem \ref{inventoryconstraints}.

\begin{algorithm}
\caption{Algorithm for computing a minimum-weight quota forest}
\label{alg:MQF}
\noindent\textbf{Input: } an achievable triple $(G,q,s)$ and an
edge-weight function $w$\\
\noindent\textbf{Output: } a minimum-weight quota forest for $(G,q,s,w)$
\begin{enumerate}
\item Form the inventory $x=\{x_e\}$ by taking, for each vertex $v$,
  the lowest-weight $q(v)-s(v)$ possible edges into $v$. (As with
  Edmonds' algorithm, there may be many possibilities here if edge
  weights are not unique.) This will ensure that the \textbf{edge} and
  \textbf{node} conditions hold.  (Since $(G,q,s)$ is achievable, the
  enough arrows condition holds, which guarantees that $v$ has enough
  incoming edges to make this step possible.)

\item Compute the strong-component graph of $G[x]$. A subset $S$ of
  nodes violates \textbf{subset} iff it corresponds to a start-free
  source in the component graph. If no subset $S\subset V(G)$ violates
  \textbf{subset}, return $x$, which is the inventory of a
  minimum-weight quota forest.

\item Otherwise, form $G'=G/S$, and define $w'(e)=w(e)$ unless $e$
  is an edge from $G\setminus S$ to $S$, in which case set
  $w'(e)=w(e) - w(e')$, where $e'$ is the heaviest edge in $G[S][x]$
  such that $t(e)=t(e')$. Set $q'(v_S)=1$ and $s'(v_S)=0$,
  where $v_S$ is the new vertex formed by contracting $S$; otherwise
  $q'=q$ and $s'=s$. Finally, for each edge $e$ from $S$ to
  $V(G)\setminus S$, set $c'_e = c_e q(i(e))$.
  Note that if $|S|>1$, we have decreased the number of vertices
  without adding any loops; if $|S|=1$ the number of vertices is
  unchanged but we have deleted any loops over $S$. So we can proceed
  by induction on the number of vertices plus the number of loops.

\item Apply the algorithm recursively to compute the inventory of a
  MQF $T'$ for $(G',q',s',w')$. Then the inventory of a MQF $T$ for
  $(G,q,s,w)$ is $w(T)=w(T')+w(C)$, where
  $$w(C)=\sum_{v\in S}q(v)=\sum_{e\in E(G[S])}x_e$$
  is the weight of the Eulerian cycle $C$ which immerses onto
  $G[S][x]$ in the proof of Theorem \ref{inventoryconstraints}.

\item If desired, construct $T'$ explicitly and extend it to a
  quota forest $T$ for $G$. As in Edmonds' algorithm, we start by
  setting $T=T'\cup C\setminus \{e'\}$. But we now have to
  redistribute the edges coming out of $v_S$: for any edge
  $e\in E(G)$ from a vertex $v\in S$ to a vertex in $V(S)\setminus S$,
  $T'$ may contain up to $q(v)c_e$ copies of $e$. The initial points
  of these edges can be distributed arbitrarily among the $q(v)$
  lifts of $v$ in $C$, so that no lift has more than $c_e$ copies of
  $e$ coming out. The resulting $T$ is a MQF for $(G,q,s,w)$.

\end{enumerate}
\end{algorithm}

We remark that all inventories of MSAs are vertices of the
spanning-arborescence polyhedron, since they are all $\{0,1\}$ vectors
with a fixed norm.  This is no longer true for MQFs; in particular, if
a vertex $v$ has two incoming edges with the same weight, these edges
can be traded off for each other, and only the extreme choices can be
vertices of the inventory polyhedron.

If we only care about finding a minimum inventory of an MQF, without
actually trying to reconstruct the forest itself, the complexity of a
na\"ive implementation of this algorithm is essentially the same as
that of Edmonds' algorithm. The main difference is that, instead of
finding the least-weight edge into each vertex $v$, we now need to
find the $q(v)-s(v)$ edges of least weight; but we can preserve the
$O(E)$ complexity of this step by using a linear-time order-statistic
algorithm.  Our algorithm takes at most $O(V)$ steps, so accounting
for the light quota arithmetic needed, we get a complexity of
$O(EV\log ||q||_\infty)$.

\subsubsection*{Example}

This example highlights some complications which do not arise in
Edmonds' algorithm, such as edge replication, the relevance of loops,
and keeping track of quotas. Consider the directed graph $G$ in Figure
\ref{fig:MQTexample1}, whose edges have weights ranging from $1$ to
$4$. Quotas are indicated on the vertices; the start portfolio
consists of one copy of the quota-$4$ vertex.

\newdimen\linewd \linewd=0.6pt
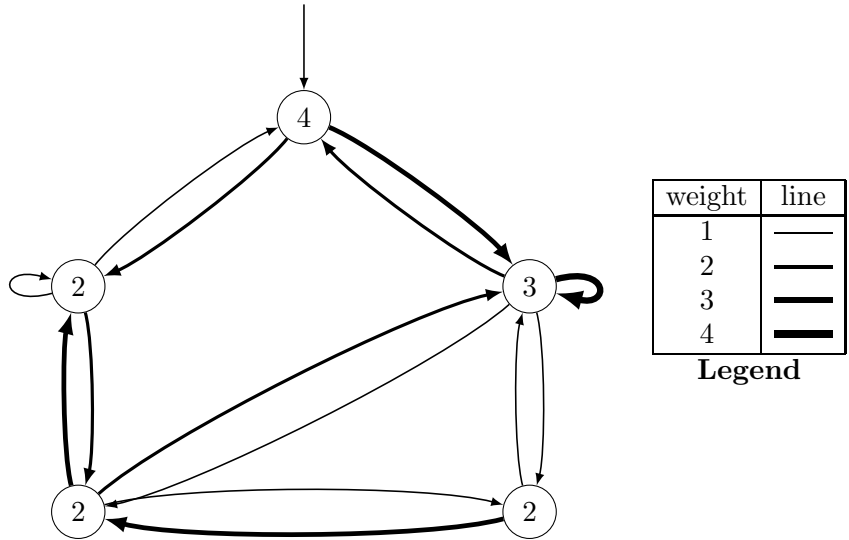
\begin{figure}
\begin{tikzpicture}[scale=1.5]
\foreach \i/\q/\x/\y in {1/4/2/3.5, 2/2/0/2, 3/2/0/0, 4/2/4/0, 5/3/4/2}
  \node at (\x,\y) [circle, inner sep=4pt, draw] (P\i) {$\q$};
\coordinate (P0) at (2,4.5);

\foreach \i/\j/\w in { 
  1/2/2, 2/3/2, 3/4/1, 4/5/1, 5/1/2,
  1/5/3, 5/4/1, 4/3/3, 3/2/3, 2/1/1,
  3/5/2, 5/3/1
}
\draw[->, line width=\w\linewd] (P\i) to 
  [bend left=15, looseness=0.5, sloped] (P\j);

\draw[->, line width=1\linewd] (P0) to (P1);
\draw[->, line width=1\linewd] (P2) to [loop left] ();
\draw[->, line width=4\linewd] (P5) to [loop right] ();

\node[anchor=south west] at (5,1) {
  \begin{tabular}{|c|c|}
  \hline
  weight & line\\ 
  \hline
  1 & \rule[0.5ex]{2em}{1\linewd}\\
  2 & \rule[0.5ex]{2em}{2\linewd}\\
  3 & \rule[0.5ex]{2em}{3\linewd}\\
  4 & \rule[0.5ex]{2em}{4\linewd}\\
  \hline
  \multicolumn{2}{c}{\textbf{Legend}}
  \end{tabular}
};
\end{tikzpicture}
\caption{A directed, edge-weighted graph for which we will compute a
  minimum quota tree.  Quotas are indicated on the vertices; edge
  weights range from $1$ to $4$, indicated by line thickness.}
\label{fig:MQTexample1}
\end{figure}

The first step in the algorithm greedily selects a minimum edge inventory
subject to the \textbf{edge} and \textbf{node} constraints. See Figure
\ref{fig:MQTexample2}; each edge $e$ is labeled by $x_e$, the number of
copies of that edge in the inventory. In step $2$ we find two subsets
$S$ violating the \textbf{subset} condition; these are indicated with
dashed lines in Figure \ref{fig:MQTexample2}.

\begin{figure}
\begin{tikzpicture} [scale=1.5]
\foreach \i/\q/\x/\y in {1/4/2/3.5, 2/2/0/2, 3/2/0/0, 4/2/4/0, 5/3/4/2} {
  \node at (\x,\y) [circle, inner sep=4pt, draw] (P\i) {$\q$};
  \node at (\x,\y) [circle, minimum size=60pt]   (C\i) {}; 
}
\coordinate (P0) at (2,4.5);

\foreach \i/\j/\w/\c in { 
  3/4/1/1, 4/5/1/2, 5/1/2/1, 5/4/1/1, 2/1/1/2, 3/5/2/1, 5/3/1/2
}
  \draw[->, line width=\w\linewd] (P\i)
     to [bend left=15, looseness=0.5, sloped]
     node[midway, fill=white, opaque] {$\c$}
     (P\j);

\draw[->, line width=1\linewd] (P0) to 
     (P1);

\draw[->, line width=1\linewd] (P2) to [loop left] node[auto=left] {2} ();

\draw[blue,dashed] (P2) circle (20pt);



\draw (C3.270) [red,dashed] -- (C4.270) arc (270:360:20pt) -- (C5.0)
   arc (0:116.5:20pt) -- (C3.116.5) arc (116.5:270:20pt) -- cycle;

\end{tikzpicture}
\caption{The multigraph $G[x]$ representing the minimal-weight
  inventory satisfying the \textbf{edge} and \textbf{node}
  conditions. Edge $e$ is labeled with the inventory value $x_e$ Two
  clusters violate the \textbf{subset} constraint; these are sources
  in the strong-component graph of the inventory.}
\label{fig:MQTexample2}
\end{figure}
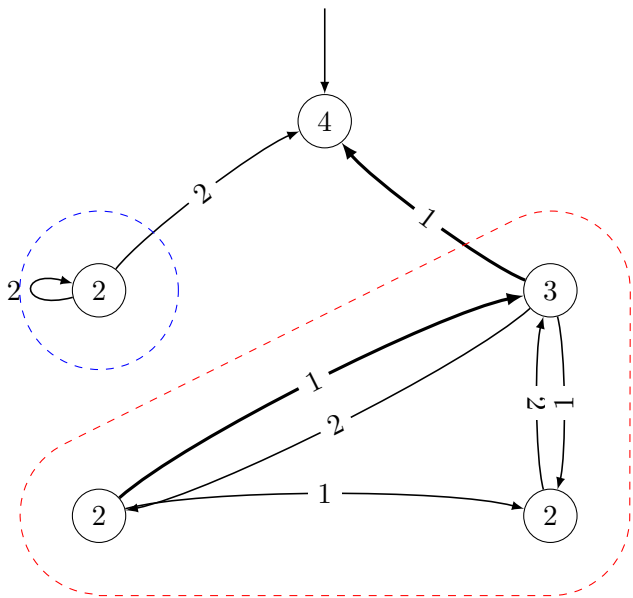

In step 3 we form a quotient graph by collapsing each violating
subset to a single node with quota $1$, removing internal edges. (Note
when we ``collapse'' the singleton subset, we still have a that one
subset is a singleton with a loop; the collapse removes the loop.)
Edges out of the subset are reweighted and duplicated as
described. The resulting quotient graph $G'$, with new quotas $q'$,
is shown in Figure
\ref{fig:MQTexample3}.  The second number $c$ on each edge indicates
that $G'$ actually contains $c$ copies of that edge.  For future
reference, we remember for each
subset the weight of the associated immersed Eulerian cycle; in this
example, that is $2$ for the singleton subset and $8$ for the
$3$-vertex subset.

In step 4 we recursively apply the algorithm to $(G',q',s',w')$.
In this case there
are no more \textbf{subset} violations, so the minimal inventory comes from
a quota tree. The first number on each edge in Figure \ref{fig:MQTexample3}
indicates the number of copies of that edge in the inventory.  The weight
of this inventory is $6$, which is the weight of a MQT for $G'$. To get the
weight of a MQT for $G$, we need to add in the weights of the Eulerian
cycles we collapsed; thus a MQT for $G$ has weight $6+2+8=16$. Figure
\ref{fig:MQTexample4} shows the weight-$16$ MQT for $G$ constructed by
step $5$ of the algorithm.

\begin{figure}
\begin{tikzpicture} [scale=1.5]
\foreach \i/\q/\x/\y/\c in {1/4/2/3.5/black, 2/1/0.3/1/blue, 3/1/3.7/1/red}
  \node at (\x,\y) [circle, inner sep=4pt, draw=\c] (P\i) {$\q$};
\coordinate (P0) at (2,4.5);
\draw[->, line width=1\linewd] (P0) to 
     (P1);

\foreach \i/\j/\w/\u/\c in { 
  1/2/1/1/1, 2/1/1/2/2, 1/3/1/1/1, 3/1/2/1/3, 2/3/1/0/2, 3/2/2/0/2
}
  \draw[->, line width=\w\linewd] (P\i)
     to [bend left=15, looseness=0.5, sloped]
     node[midway, fill=white, opaque] {$\u/\c$}
     (P\j);

\draw[->, line width=2\linewd] (P3) to [loop right] 
   node [midway] {0/3} ();
\end{tikzpicture}
\caption{The quotient graph $G'$ obtained by collapsing the subsets
  $S$, and reweighting and duplicating the edges out of the collapsed
  nodes.  New weights are shown according to the legend in Figure
  \ref{fig:MQTexample1}.  The label $x_e/c_e$ on an edge $e$ indicates
  that $c_e$ copies of that edge are available, of which $x_e$ copies
  are used in the minimal inventory.}
\label{fig:MQTexample3}
\end{figure}
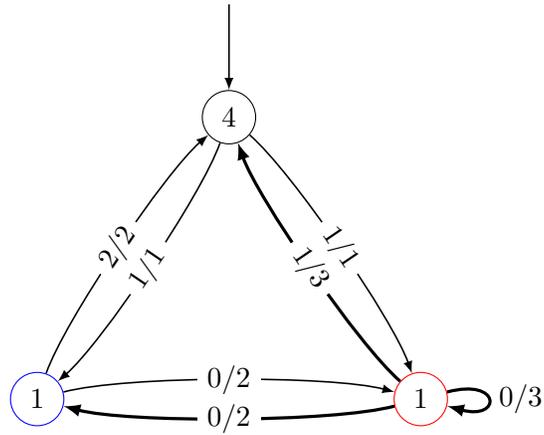

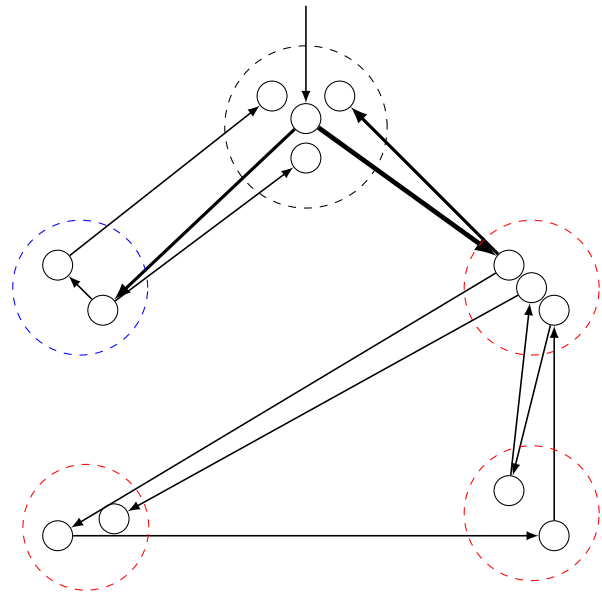
\begin{figure}
\begin{tikzpicture} [scale=1.5]
\foreach \label/\x/\y in {
  1a/2/3.5, 1b/1.7/3.7, 1c/2.3/3.7, 1d/2/3.15,
  2a/-0.2/2.2, 2b/0.2/1.8,
  3a/-0.2/-0.2, 3b/0.3/-0.05,
  4a/3.8/0.2, 4b/4.2/-0.2,
  5a/3.8/2.2, 5b/4/2, 5c/4.2/1.8
}
  \node at (\x,\y) [circle, inner sep=4pt, draw] (\label) {};
\coordinate (P0) at (2,4.5);

\foreach \i/\j/\w in {
  P0/1a/1 ,
  1a/2b/2, 2b/1d/1, 2b/2a/1, 2a/1b/1,
  1a/5a/3, 5a/1c/2,
  5a/3a/1, 3a/4b/1, 4b/5c/1, 5c/4a/1, 4a/5b/1, 5b/3b/1
}
\draw[->, line width=\w\linewd] (\i) -- (\j);

\node [draw=black, dashed, circle, fit=(1a) (1b) (1c) (1d)] {};
\node [draw=blue,  dashed, circle, fit=(2a) (2b)          ] {};
\node [draw=red,   dashed, circle, fit=(3a) (3b)          ] {};
\node [draw=red,   dashed, circle, fit=(4a) (4b)          ] {};
\node [draw=red,   dashed, circle, fit=(5a) (5b) (5c)     ] {};

\end{tikzpicture}
\caption{The MQT for the graph $G$ as constructed by this algorithm; it has
weight 16.}
\label{fig:MQTexample4}
\end{figure}

\section{Further work}
\label{sec:further-work}
Some open problems/research directions related to quota trees:

\begin{enumerate}

\item Lagrange inversion is powerful but sometimes unsatisfying.  For
  example, Goulden \cite[\S3.3.10]{goulden2004combinatorial} uses it
  to give a one-line proof of Cayley's result that there are $n^{n-1}$
  labeled rooted trees on $n$ nodes, but the proof is somewhat
  unsatisfying since it doesn't yield a combinatorial correspondence
  between trees and sequences.  Give a similar bijective proof of
  Theorem \ref{countingforests}, for example using objects such as
  Pr\"ufer sequences or parking functions.

\item Quota search provides an illuminating context for the
  all-dest\-in\-a\-tion $k$-lightest-paths problem, but the generic
  meta-algorithm as presented here treats the priority queue as a
  black box, and is thus less efficient than (for example) Eppstein's
  algorithm \cite{eppstein1998finding}. Can existing $k$-lightest-path
  algorithms be formulated in terms of quota search?

\item Extend the combinatorial reciprocity theorem for roses to
  general quota trees. That is, give a combinatorial interpretation of
  the objects counted by quota forests over more general graphs with a
  formally negative start portfolio.

\item Another approach to uniformly sampling a spanning tree of a
  graph $G$ is to use random walks on $G$.  Wilson's algorithm (see
  \cite{wilson1996generating}) in particular is very tempting in this
  context, as it is simple, efficient, and applies to directed
  graphs. It would be interesting to adapt such a random-walk-based
  algorithm to the context of quota trees.

\item Tarjan \cite{tarjan1977finding} gives an implementation of
  Edmonds' algorithm with complexity $O(E\log V)$ for sparse graphs,
  or $O(V^2)$ for dense graphs. Adapt these algorithms to the
  context of finding minimum-weight quota forests.

\item Identify applications where quota trees arise naturally
  (e.g. epidemiology, message broadcast, etc.) Find appearances
  of quota trees in other guises (e.g. the Narayana numbers),
  develop dictionaries relating other areas of mathematics to
  quota trees, and drive the theory accordingly.

\end{enumerate}

\bibliographystyle{plain}
\bibliography{quotatrees}

\begin{thebibliography}{10}

\bibitem{burris1981course}
S.~Burris and H.P. Sankappanavar.
\newblock {\em A course in universal algebra}.
\newblock Graduate texts in mathematics. Springer-Verlag, 1981.
\newblock 1981 edition available at
  https://www.math.uwaterloo.ca/~snburris/htdocs/ualg.html.

\bibitem{colbourn1996algorithms}
Charles Colbourn, Wendy Myrvold, and Eugene Neyfeld.
\newblock Two algorithms for unranking arborescences.
\newblock {\em J.~Algorithms}, 20:268--281, 1996.

\bibitem{cormen2009introduction}
T.H. Cormen.
\newblock {\em Introduction to Algorithms}.
\newblock MIT Press, 2009.

\bibitem{edmonds1967optimum}
Jack Edmonds.
\newblock Optimum branchings.
\newblock {\em Journal of Research of the National Bureau of Standards B},
  71(4):233--240, 1967.

\bibitem{eppstein1998finding}
David Eppstein.
\newblock Finding the $k$ shortest paths.
\newblock {\em SIAM Journal on computing}, 28(2):652--673, 1998.

\bibitem{fink2017streaming}
Russell~A Fink, David~R Zaret, Rachel~B Stonehirsch, Robert~M Seng, and
  Samantha~M Tyson.
\newblock Streaming, plaintext private information retrieval using regular
  expressions on arbitrary length search strings.
\newblock In {\em 2017 IEEE Symposium on Privacy-Aware Computing (PAC)}, pages
  107--118. IEEE, 2017.

\bibitem{golub2012matrix}
G.H. Golub and C.F. Van~Loan.
\newblock {\em Matrix Computations}.
\newblock Johns Hopkins University Press, 4th ed. edition, 2012.

\bibitem{gondran1984graphs}
M.~Gondran and M.~Minoux.
\newblock {\em Graphs and algorithms}.
\newblock Wiley-Interscience series in discrete mathematics. Wiley, 1984.

\bibitem{goulden2004combinatorial}
I.P. Goulden and D.M. Jackson.
\newblock {\em Combinatorial Enumeration}.
\newblock Dover Publications, 2004.

\bibitem{hopcroftullman1979}
John Hopcroft and Jeffrey Ullman.
\newblock {\em Introduction to Automata Theory, Languages, and Computation}.
\newblock Addison-Wesley, 1979.

\bibitem{norris1995universal}
Nancy Norris.
\newblock Universal covers of graphs: isomorphism to depth $n-1$ implies
  isomorphism to all depths.
\newblock {\em Discrete Applied Mathematics}, 56(1):61--74, 1995.

\bibitem{oeis}
N.J.A. Sloane.
\newblock The on-line encyclopedia of integer sequences.
\newblock Published electronically at \texttt{https://oeis.org}; retrieved
  7/7/2017.

\bibitem{Stallings1983}
John~R. Stallings.
\newblock Topology of finite graphs.
\newblock {\em Inventiones mathematicae}, 71(3):551--565, 1983.

\bibitem{stanley1974combinatorial}
Richard~P Stanley.
\newblock Combinatorial reciprocity theorems.
\newblock {\em Advances in Mathematics}, 14(2):192--253, 1974.

\bibitem{stanley2001enumerative}
R.P. Stanley.
\newblock {\em Enumerative Combinatorics:}, volume~2 of {\em Cambridge Studies
  in Advanced Mathematics}.
\newblock Cambridge University Press, 2001.

\bibitem{tarjan1977finding}
R.~E. Tarjan.
\newblock Finding optimum branchings.
\newblock {\em Networks}, 7(1):25--35, March 1977.

\bibitem{wilson1996generating}
David Wilson.
\newblock Generating random spanning trees more quickly than the cover time.
\newblock In {\em Proceedings of the Twenty-eighth Annual ACM Symposium on the
  Theory of Computing (Philadelpha, PA, 1996)}, pages 296--303. ACM, 1996.

\bibitem{Yamashita:1988:CAN:62546.62568}
Masafumi Yamashita and Tiko Kameda.
\newblock Computing on an anonymous network.
\newblock In {\em Proceedings of the Seventh Annual ACM Symposium on Principles
  of Distributed Computing}, PODC '88, pages 117--130, New York, NY, USA, 1988.
  ACM.

\bibitem{zeilberger1985combinatorial}
Doron Zeilberger.
\newblock A combinatorial approach to matrix algebra.
\newblock {\em Discrete Mathematics}, 56(1):61--72, 1985.

\end{thebibliography}

\end{document}